\newtheorem{theorem}{Theorem}
\newtheorem{lemma}[theorem]{Lemma}
\newtheorem{corollary}[theorem]{Corollary}
\newtheorem{Pro}{Proposition}
\theoremstyle{remark}
\newtheorem{remark}{Remark}
\newtheorem*{acknowledgement}{Acknowledgment}
\newcommand{\NN}{\mathbb{N}}
\newcommand{\ZZ}{\mathbb{Z}}
 \newcommand{\CC}{\mathbb{C}}
\newcommand{\TT}{\mathbb{T}} 
\newcommand{\EE}{\mathbb{E}} 
\newcommand{\PP}{\mathbb{P}}
\newcommand{\RR}{\mathbb{R}} 
\newcommand{\iy}{\infty}
\newcommand{\lcap}{\operatorname{cap}}
\newcommand{\ellcap}{\operatorname{lcap}}
\newcommand{\diam}{\operatorname{diam}}
\newcommand{\dx}{\textrm{d}}
\begin{document}
\bibliographystyle{alpha}

\title[Scaling limits of anisotropic Hastings-Levitov clusters]{Scaling limits of anisotropic Hastings-Levitov clusters}


\author[Johansson]{Fredrik Johansson}
\address{Johansson: Department of Mathematics, Royal Institute of Technology, 
100 44 Stockholm, Sweden}
\email{frejo@math.kth.se}

\author[Sola]{Alan Sola}
\address{Sola: Department of Mathematics, Royal Institute of Technology, 
100 44 Stockholm, Sweden}
\email{alansola@math.kth.se}

\author[Turner]{Amanda Turner}
\address{Turner: Department of Mathematics and Statistics, Fylde College, 
Lancaster University, Lancaster, LA1 4YF, U.K.}
\email{a.g.turner@lancaster.ac.uk}

\keywords{Anisotropic growth models, scaling limits, Loewner differential equation, boundary flow}
\subjclass[2000]{Primary 30C35, 60D05; Secondary 60K35, 60F99}

\thanks{Johansson is 
supported by grant KAW 2005.0098 from the Knut and Alice Wallenberg 
Foundation. \\
Sola is 
supported by grant KAW 2005.0098 from the Knut and Alice Wallenberg 
Foundation.}

\begin{abstract}
We consider a variation of the standard Hastings-Levitov model ${\rm HL}(0)$, in which growth is anisotropic. Two natural scaling limits are established and we give precise descriptions of the effects of the anisotropy. We show that the limit shapes can be realised as Loewner hulls and that the evolution of harmonic measure on the cluster boundary can be described by the solution to a deterministic ordinary differential equation related to the Loewner equation. We also characterise the stochastic 
fluctuations around the deterministic limit flow.
\end{abstract}
\maketitle

\section{Introduction} 
\subsection{Generalized $\rm{HL}(0)$ clusters}
In this paper we consider growing sequences of compact 
sets in the complex plane $\CC$ obtained by composing random conformal mappings. 
Let $D_0$ denote the exterior unit disk
\[D_0=\{z \in \CC_{\iy}: |z|>1\},\]
and let $K_0=\CC \setminus D_0$ be the closed unit disk.
We consider a simply connected set $D_1 \subset D_0$, such that  
$P=D_1^c \setminus K_0$ has diameter $d \in (0,1]$ and $1 \in \overline{P}$.
The set $P$ models an incoming particle, which is attached to the unit
disk at $1$. 
There exists a unique conformal mapping
\begin{equation}
f_{P}: D_0 \rightarrow D_1
\end{equation}
with expansion at infinity of the form
$f_{P}(z)=C(P)z+o(z)$ for some $C(P)>0$. The value $C(P)=\lcap(K_0 \cup P)$ is called the logarithmic
capacity.

Suppose $P_1, P_2,\ldots$ is a sequence of particles (or, equivalently, let $f_{P_1}, f_{P_2}, \dots$ be the sequence of associated conformal mappings) with $\diam(P_j)=d_j$. 
Let $\theta_1, \theta_2, \ldots$ be a sequence of angles.
Define rotated copies of the maps $\{ f_{P_j}\}$ by setting
\[f_{P_j}^{\theta_j}(z)
=e^{i \theta_j}f_{P_j}(e^{-i \theta_j}z), \quad j=1,2, \ldots \,.\]
Take $\Phi_0(z)=z$, and recursively define
\begin{equation}
\Phi_n(z)=\Phi_{n-1}\circ f_{P_n}^{\theta_n}(z), \quad n=1,2,\ldots.
\label{HLrecipe}
\end{equation}
This generates a sequence of conformal maps $\Phi_n: D_0 \rightarrow D_n =\CC \setminus K_n$,
where $K_{n-1} \subset K_n$ are growing compact sets, which we usually call clusters. Loosely speaking we add, at the $n$th step, a particle of diameter 
$d_n|\Phi_{n-1}'(e^{i \theta_n})|$ to the previous cluster $K_{n-1}$ at the 
point $\Phi_{n-1}(e^{i\theta_n})$.

By constructing the sequences $\{\theta_j\}$ and $\{d_j\}$ in different ways, it is possible to describe a wide class of growth models. The most well-known are the Hastings-Levitov family of models ${\rm HL}(\alpha)$, indexed by a parameter $\alpha \in [0,2]$. Here the $\theta_j$ are chosen to be independent random variables distributed uniformly on the unit circle which, by conformal invariance,
corresponds to the attachment point being distributed according to 
harmonic measure at infinity. The particle diameters are taken as $d_j=d/|\Phi_{j-1}'(e^{i\theta_j})|^{\alpha/2}$. 

In this paper, we study a variant of the ${\rm HL}(0)$ model in which $\theta_1, \theta_2, \dots$ are independent identically distributed random variables on the unit circle $\TT$ with common law $\nu$ and $d_j=d$. We shall refer to this growth model as anisotropic Hastings-Levitov, ${\rm AHL}(\nu)$. Our limit results are not sensitive to the shapes of particles $P_j$ and, in fact, we are even able to relax the restraint $d_j=d$, to allow for $P_1, P_2, \dots $ to be chosen so that $d_1, d_2,\ldots$ are independent identically distributed random variables (independent of $\{\theta_j\}$) with law $\sigma$, satisfying certain conditions to be stated later. \footnote{One way of constructing such a sequence is by fixing a deterministic measurable mapping $d \mapsto f_{P(d)}$, such that $\diam(P(d))=d$, and then choosing an independent identically distributed sequence $d_1, d_2, \dots$ with law $\sigma$.} 

 

\subsection{Background and motivation}
The motivation behind studying these clusters 
comes from growth processes that arise in physics, such as
diffusion-limited aggregation (DLA) \cite{WS}, anisotropic
diffusion-limited aggregation \cite{JuKoBo} and the Eden model \cite{Eden}.  In
1998, Hastings and Levitov \cite{HL} formulated a
conformal mapping approach to modelling Laplacian growth of which DLA
and the Eden model are special cases. They
defined the family of growth models, ${\rm HL}(\alpha)$, whose construction is described in the previous section.
The $\alpha=2$ version is a candidate for off-lattice DLA. In this case, the diameters of the mapped particles are (more or less) the same. 

The Hastings-Levitov model has been widely discussed in the physics
literature. In the original paper \cite{HL}, Hastings and Levitov
studied the model numerically and found evidence for a phase
transition in the growth behaviour at $\alpha=1$. Further numerical investigations can be seen in, for example, the papers \cite{David} and \cite{MS}.

Unfortunately, the Hastings-Levitov model has proved difficult to 
analyse rigourously, particularly in the $\alpha>0$ case. We give a 
brief review of the known results. In 2005, 
Rohde and Zinsmeister \cite{RZ} established the 
existence of limit clusters for $\alpha=0$ when the aggregate 
is scaled by capacity, and showed that the Hausdorff dimension of the
limit clusters is $1$, almost surely. They also considered a regularised
version of ${\rm HL}(\alpha)$ for $\alpha>0$ and estimated the growth 
rate of the capacity and length of the clusters. 
In 2009, Johansson and Sola \cite{JS} studied
Loewner chains driven by compound Poisson processes. Certain cases 
of these were found to correspond to
${\rm HL}(0)$ clusters with random particle sizes, and 
the existence of (one-dimensional) limit clusters was established. 
The 2009 paper of Norris and Turner explored the evolution of harmonic 
measure on the boundary of ${\rm HL}(0)$ clusters and showed that this
converges to the coalescing Brownian flow.
We would finally like to mention the 2001 and 2002 papers of Carleson and Makarov 
(\cite{CM1}, \cite{CM2}), where the Loewner-Kufarev 
equation is used to describe deterministic 
versions of Laplacian growth.


In this paper we have modified the setup of the Hastings-Levitov model in the $\alpha=0$ case.
The use of more general distributions for the angles is a way of introducing anisotropy or
localization in the growth. This is similar in spirit to the work of
Popescu, Hentschel, and Family \cite{PHF}, who study numerically 
a variant of ${\rm HL}(2)$, where the angles are distributed
according to a certain density with $m$-fold symmetry.
They suggest that such anisotropic
Hastings-Levitov models may provide a description for the growth of 
bacterial colonies where the concentration of nutrients is directional. We 
discuss their work further in the next section.

Allowing for non-uniform angular distributions
results in scaling limits in which the anisotropy is reflected.
We consider two different 
natural scaling limits where we scale the particle sizes.
We prove a shape theorem 
that describes the global macroscopic behaviour of the cluster: in 
the case of uniformly distributed angles,
the shape is a disk (as was previously known \cite{personalRohde}); but 
in the anisotropic case the limit shapes can be realised as
non-trivial Loewner
hulls. For the anisotropic case we also show that the 
evolution of harmonic measure on 
the cluster boundary is deterministic with small random fluctuations,
unlike in the uniform case where the behaviour is purely stochastic.
\subsection{Outline of the paper}
Our paper is organised as follows. In Section 2, we 
review some background material concerning the Loewner equation and the 
coalescing Brownian flow and describe the general framework of our paper.
We also discuss some examples of angular distributions that lead to 
interesting anisotropic behaviour in the growth. In Section 3, we establish 
continuity properties of the Loewner-Kufarev equation with respect to 
measures, and use this to prove a shape theorem for the limit clusters.
In Section 4, we consider the evolution of harmonic measure on the cluster 
boundary. For general measures, we 
first prove that the flow on the boundary is described 
by an deterministic ordinary differential equation, and then obtain a 
description of the stochastic fluctuations around this deterministic flow.
Finally, we show that uniformly chosen angles lead to purely stochastic 
behaviour, even if the particle sizes are chosen randomly.

\section{Preliminaries}
In this section we review some background material that is needed for 
our proofs.
\subsection{Loewner chains driven by measures}\label{Loewner}
A decreasing Loewner chain is a family of conformal mappings 
\begin{equation*}
f_t: D_0 \rightarrow \CC \setminus K_t, 
\quad \iy \mapsto \iy, \quad f_t'(\iy)>0,
\end{equation*} 
onto the complements of a growing family of compact sets, called hulls,
with 
\[
K_{t_1} \subset K_{t_2} \quad {\rm for} \quad t_1 < t_2.
\] 
We always take $K_0$ to be the closed unit disk. The logarithmic capacity 
of each $K_t$ is given by
\begin{equation*}
\lcap(K_t)=\lim_{z \to \iy}\frac{f_t(z)}{z}.
\end{equation*}
Let $\mathcal{P}=\mathcal{P}(\TT)$ denote the class of probability 
measures on $\TT$. 
Under some natural assumptions on the function $t \mapsto \lcap(K_t)$, 
such a chain can be parametrized in terms of 
families $\{\mu_t\}_{t\geq 0}$, $\mu_t \in \mathcal{P}(\TT)$. 
More precisely, the conformal mappings $f_t$ satisfy the 
Loewner-Kufarev equation
\begin{equation}
\partial_tf_t(z)=
z f_t'(z)\int_{\mathbb{T}}\frac{z+\zeta}{z-\zeta}\dx \mu_t(\zeta),
\label{measureLPDE}
\end{equation}
with initial condition $f_0(z)=z$. Conversely, if $t \mapsto
\|\mu_t\|$ is locally integrable (which is immediate for probability
measures) then the solution to \eqref{measureLPDE} exists and is a
Loewner chain. See \cite{CM1} for a general discussion.

The classical example is the case of pure point masses 
\[\mu_t=\delta_{\xi(t)},\]
where $\xi=\xi(t)$ is a unimodular function. The Loewner-Kufarev equation then reduces to the equation
\begin{equation}
\partial_tf_t(z)=zf_t'(z)\frac{z+\xi(t)}{z-\xi(t)},
\label{classicalloewner}
\end{equation}
originally introduced by Loewner in 1923. The function $\xi
(t)$ is usually called the driving function. The particular choice
$\xi(t)=1$ produces as solutions the \emph{basic slit mappings} $f_{d(t)}: D_0 \to D_0
\setminus [1, 1+d(t)]$, with slit lengths $d(t)$  
given by the explicit formula
\begin{equation}
d(t)=2 e^{t}(1-\sqrt{1-e^{-t}})-2.
\label{slitformula}
\end{equation}
We can recover (the slit version of) the $\rm{HL}(0)$ mappings $\Phi_n$ by driving the Loewner equation with a non-constant point mass
at
\begin{equation} 
\xi(t)=\exp\left(i \sum_{j=1}^n\theta_j \chi_{[T_{j-1}, T_j]}(t)\right),
\label{HLmeasure}
\end{equation}
where the times $T_j$ relate to the slit lengths $d$ via the formula
(\ref{slitformula}).

Choosing absolutely continuous driving measures 
\[\dx \mu_t= h_t(\zeta) |\dx \zeta|\]
results in the growth of the clusters no longer being concentrated at a 
single point. In the simplest 
case $\dx \mu_t(\zeta)=|\dx \zeta|/2\pi$,
the Loewner-Kufarev equation reduces to
\[\partial_t f_t(z)=z f_t'(z),\]
and we see that $f_t(z)=e^tz$, so that $K_t=e^t K_0$. We shall see that absolutely
continuous driving measures arise naturally in connection with the anisotropic ${\rm HL}(0)$
clusters.

We can realize more general particles than slits using a driving function in the
following way. Consider a particle $P$ such that $\partial P \cap D_0$ 
can be described by a (sufficiently smooth) crosscut $\beta$ of $D_0$. 
We parametrize $\beta(t)$ according to capacity, that is,
$\lcap(K_0 \cup \beta[0,t))=e^t, \, t \in [0,T_P)$, where
\[T_P=\ellcap(P):=\log(\lcap (K_0 \cup P)).\] 
We can then find a driving function
for the Loewner equation that produces a family $f_t:D_0 \to D_0 \setminus \beta[0,t)$. As $t \to T_P$, the
conformal maps $f_t$ converge uniformly on compact subsets of $D_0$ to the
mapping $f_P: D_0 \to D_0 \setminus P$. If $\xi: [0, T_P) \rightarrow \TT$
denotes the driving function for a single particle, we 
obtain a driving function for the cluster similarly to
\eqref{HLmeasure}. 

There is a useful relation between the diameter of
the particle, its capacity, and the driving function. Set
$R(P):=\sqrt{T_P}+\sup_{0 \le t \le T_P}|\xi(t)|$. Then, as is proved
in \cite[Lemma 2.1]{LSW}, we have 
\begin{equation}\label{capvsdiam}
d(P)=\diam(P) \asymp R(P).
\end{equation}
Moreover, one can prove that there exists a constant $c < \infty$ such that
\[
c^{-1} h^2 \le \ellcap(P) \le c\, hd
\]
for small $d,h$, where $h=\sup\{|z|: z \in P\}-1$. Indeed, the first inequality follows by comparing with the slit map solution to the Loewner equation. The second follows from a harmonic measure estimate and the identity $\ellcap(P)=\EE[\log|B_{\tau}|]$, where $B_t$ is a planar Brownian motion starded from $\infty$ and $\tau$ is the hitting time of $K_0 \cup P$. In particular we see that there are natural sequences of particles such that $\ellcap(P) \asymp d^2$ as $d \to 0$. We shall make this assumption in certain sections of this paper.
 
We will sometimes need to consider the radial Loewner equation lifted to the real line: let $\gamma_t(x)=-i\log g_t(e^{2\pi i x})/2\pi$, where $g_t=f_t^{-1}$ and 
$f_t$ is a solution to \eqref{measureLPDE} and $x \in \RR$. Then $\gamma_t$ satisfies the 
differential equation
\begin{equation}
\partial_t\gamma_t(x)=
\frac{1}{2\pi}\int_{0}^1\cot\left(\pi (\gamma_t(x)-y)\right)\dx \mu_t(e^{2\pi iy}),
\label{Hilbert}
\end{equation}
with $\gamma_0(x)=x$ (see \cite[Chapter 4]{Lawler}). This is well-defined 
as long as $\gamma_t(x)$ is outside the support of $\mu_t$. However, we may
interpret the integral in the sense of principal values, that is, as a multiple
of the Hilbert transform of the measure $\mu_t$ (see \cite[Chapter 3]{Garnett}),
\[H[\nu_t](x)={\rm p.v.}\,\frac{1}{2\pi}
\int_0^1\cot(\pi(x-y))\dx \mu_t(e^{2\pi iy}).\]
In this way, for nice enough measures, we 
obtain a differential equation defining a flow on all of $\TT$.

\subsection{Coalescing Brownian flow and harmonic measure on the cluster boundary}\label{BrownianWeb}
The coalescing Brownian flow (also known as the Arratia flow and the Brownian web) can loosely be defined as a family of coalescing Brownian motions, starting at all possible points in continuous space-time. Arratia \cite{A79} first
considered this object in 1979 as a limit for discrete coalescing random walks. Since then it has been studied by, amongst others, T{\'o}th and Werner \cite{TW}, Fontes, Isopi, Newman
and Ravishankar \cite{FINR} and recently Norris and Turner \cite{NT}. One of the difficulties in studying the coalescing Brownian flow is constructing a suitable topological space on which a unique measure with the necessary properties exists. In this section we outline the construction of Norris and Turner \cite{NT} and show how the coalescing Brownian flow relates to the evolution of harmonic measure on the boundary of the ${\rm AHL}(\nu)$ clusters.

Let $\mathcal{R}$ be the set of non-decreasing, right-continuous functions $f^+:\RR\to\RR$ that satisfy the property
$$
f^+(x+n) = f^+(x) + n, \quad x\in\RR,\quad n \in \ZZ.
$$
Write $\mathcal{L}$ for the analogous set of left-continuous functions and let $\mathcal{D}$ be the set of all pairs $f=\{f^-,f^+\}$, where $f^-$ is the left-continuous modification of $f^+$.
Since $x+f^+(x)$ is strictly increasing in $x$, there is for each $t\in\RR$
a unique $x\in\RR$ such that
$$
\frac{x+f^-(x)}2\leq t\leq\frac{x+f^+(x)}2.
$$
Let $f^\times(t)=t-x$. A metric $d_{\mathcal{D}}$ is defined on $\mathcal{D}$ by
$$
d_\mathcal{D}(f,g)=\sup_{t\in[0,1)}|f^\times(t)-g^\times(t)|.
$$
Under this metric, the space $(\mathcal{D}, d_{\mathcal{D}})$ is complete and separable. Let $\mathcal{D}_0$ be the set of circle maps whose liftings are in $\mathcal{D}$.


Write $I=I_1\oplus I_2$ if $I_1, I_2$ are disjoint intervals with $\sup I_1=\inf I_2$ and $I=I_1\cup I_2$. 
The set of cadlag weak flows $D^\circ$ consists of flows $\phi=(\phi_I:I\subseteq [0,\infty))$, where $\phi_I\in\mathcal{D}_0$ and $I$ ranges
over all non-empty finite intervals that satisfy
$$
\phi_{I_2}^-\circ\phi_{I_1}^-\leq\phi_I^-\leq\phi_I^+\leq\phi_{I_2}^+\circ\phi_{I_1}^+,
\quad I=I_1\oplus I_2
$$
and, for all $t\in (0,\infty)$,
$$
\phi_{(s,t)}\to\operatorname{id}\quad\text{as $s\uparrow t$},\quad
\phi_{(t,u)}\to\operatorname{id}\quad\text{as $u\downarrow t$}.
$$
For $\phi,\psi\in D^\circ$ and $n\geq1$, define
$$
d_D^{(n)}(\phi, \psi) = \inf_\lambda\left\{
\gamma(\lambda) \vee \sup_{I\subset (0,\infty)}\|\chi_n(I)\phi_I^\times-\chi_n(\lambda(I))\psi_{\lambda(I)}^\times\|\right\},
$$
where the infimum is taken over the set of increasing homeomorphisms
$\lambda$ of $(0,\infty)$, where
$$
\gamma(\lambda) = \sup_{t\in(0,\infty)}|\lambda(t)-t|\vee\sup_{s<t}\, \left|\log\left(\frac{\lambda(t) - \lambda(s)}{t-s}\right)\right|,
$$
and where $\chi_n$ is the cutoff function given by
$$
\chi_n(I)=0\vee(n+1-\sup I)\wedge1.
$$
Define
$$
d_D(\phi, \psi)=\sum_{n=1}^\infty2^{-n}\{d_D^{(n)}(\phi,\psi)\wedge1\}.
$$
Then $d_D$ is a metric on $D^\circ$ under which $D^\circ$ is
complete and separable. 

For $e=(s,x) \in [0,\infty) \times \RR$, let $D_e=D_x([s,\infty),\RR)$ denote the Skorokhod space of cadlag paths starting from $x$ at time $s$. Write $\mu_e$ for the distribution on $D_e$ of a standard Brownian motion starting from $e$. For a sequence $E=(e_k:k\in\NN)$ in $[0,\infty) \times\RR$, where $e_k=(s_k,x_k)$ say, let $D_E=\prod_{k=1}^\infty D_{e_k}$, be the complete separable metric space with metric $d_E$ on $D_E$ defined by
$$
d_E(\xi,\xi')=\sum_{k=1}^\infty 2^{-k}\{d(\xi^k,{\xi'}^k)\wedge 1\},
$$
where $d$ denotes appropriate instances of the Skorokhod metric. There exists a unique probability measure $\mu_E$ on $D_E$ under which the coordinate processes on $D_E$ are coalescing Brownian motions. Define a measurable map $Z^{e,+}:D^\circ \to D_e$ by setting
$$
Z^{e,+}(\phi)=(\phi_{(s,t]}^+(x):t\geq s),
$$
and a measurable map
$Z^{E,+}:D^\circ \to D_E$ by
$$
Z^{E,+}(\phi)^{e_k}=Z^{e_k,+}(\phi).
$$
There exists a unique Borel probability measure $\mu_A$ on $D^\circ$
such that, for any finite set $F\subset [0,\infty) \times \RR$, we have
$$
\mu_A\circ (Z^{F,+})^{-1}=\mu_F.
$$
We call any $D^\circ$-valued random variable with law $\mu_A$ a coalescing Brownian flow on the circle (see Figure 1).
\begin{figure}
\label{BW_fig}
\centering
\includegraphics[width=0.45 \textwidth]{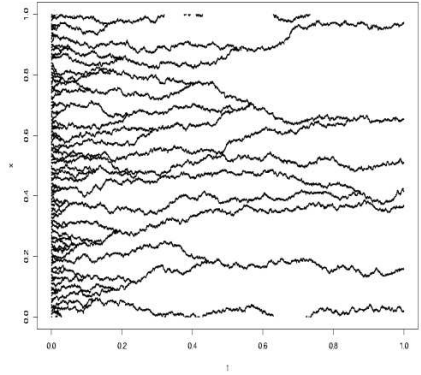}
\caption{A realisation of the coalescing Brownian flow, with only paths starting at time 0 shown.} 
\end{figure}

Recall the construction of the Hastings-Levitov clusters from the Introduction. Let $P$ be a closed, connected, simply connected subset of $D_0$ with $P \cap K_0 = \{1\}$. Write $g_{P}$ for the inverse mapping from $D_1 \to D_0$. 
There exists a unique 
$\gamma_P \in \mathcal{D}$ such that $\gamma_P$ restricts to a continuous map from the interval $(0,1)$ to itself, and such that
\[
g_P(e^{2\pi ix})=e^{2\pi i\gamma_P(x)},\quad x\in(0,1).
\]
Set $\Gamma_n=g_{P_n}\circ\dots\circ g_{P_1}$, where $g_{P_n}=(f_{P_n}^{\theta_n})^{-1}$, so that $\Gamma_n: D_n \to D_0$. 
The extension of $\Gamma_n$ to the boundary $\partial K_n=\partial D_n$, gives a natural parametrization of the boundary of the $n$th cluster by the unit circle. It has the property that, for 
$\xi,\eta\in\partial K_n$,
the normalized harmonic measure $\omega$ (from $\infty$) of the positively oriented boundary segment from $\xi$ to $\eta$
is given by $\Gamma_n(\eta)/\Gamma_n(\xi)=e^{2\pi i\omega}$.
For $m,n \in \NN$ with $m<n$, set
$$
\Gamma_{nm}=g_{P_n}\circ\dots\circ g_{P_{m+1}}|_{\partial K_0}.
$$
Set $\Gamma_{nn}=\operatorname{id}$. The circle maps $\Gamma_{nm}$ have the flow property
$$
\Gamma_{nm}\circ \Gamma_{mk}=\Gamma_{nk},\quad k\leq m\leq n.
$$
The map $\Gamma_{nm}$ expresses how the harmonic measure on $\partial K_m$ is transformed by the arrival of new particles
up to time $n$. Suppose $0<T_1 <T_2 < \cdots$ are times of a Poisson process, independent of $\{\theta_j\}$ and $\{d_j\}$, with rate to be specified later. Embed $\Gamma$ in continuous time by defining, for an interval $I\subseteq [0,\infty)$, $\Gamma_I=\Gamma_{nm}$ where $m$ and $n$ are the smallest and largest integers $i$, respectively, for which $T_i \in I$. Then $(\Gamma_I:I\subseteq [0,\infty))$ is a random variable in $D^\circ$. We denote its law by $\mu^P_A$.

In the paper \cite{NT}, Norris and Turner showed that for ${\rm HL}(0)$ clusters (i.e. clusters where the particles $P_j$ have constant diameters $d_j=d$, and $\theta_j$ is uniformly distributed on the circle), in the case of symmetric particles, $\mu_A^P\to\mu_A$ weakly on $D^\circ$ as $d\to0$, where the Poisson process $\{T_i\}$ has rate $\rho(P) \asymp d^{-3}$, defined by
$$
\rho(P) \int_0^1 (\gamma_P(x)-x)^2 \dx x = 1.
$$
If $P$ is not symmetric, the same result holds once the definition of $\Gamma_I$ is modified to 
$$
\Gamma_I(e^{2\pi ix})=e^{-2\pi i\beta t}\Gamma_{nm}(e^{2\pi i(x+\beta s)}),
$$
where $s=\inf I$ and $t=\sup I$ and $\beta = \beta(P)$ is defined by 
$$
\beta(P) = \rho(P) \int_0^1 (\gamma_P(x)-x) \dx x = \mathcal{O}(d^{-1}).
$$
In other words the following result about the evolution of harmonic measure on the cluster boundary holds.

Let $x_1,\dots,x_n$ be a positively oriented set of points in $\RR/\ZZ$ and set $x_0=x_n$.
Set $K_t=K_{\lfloor\rho(P)t\rfloor}$. For $k=1,\dots,n$, write $\omega_t^k$ for the harmonic measure in $K_t$ of the boundary segment of all fingers in $K_t$ attached between $x_{k-1}$ and $x_k$.
Let $(B_t^1,\dots,B_t^n)_{t\geq0}$ be a family of coalescing Brownian motions in $\RR/\ZZ$ starting from $(x_1,\dots,x_n)$. Then, in the limit $d\to0$, $(\omega_t^1,\dots,\omega_t^n)_{t\geq0}$ converges weakly in $D([0,\infty),[0,1]^n)$ to $(B_t^1-B_t^0,\dots,B^n_t-B_t^{n-1})_{t\geq0}$.

In this paper, we extend the study of Norris and Turner to cover random arrival points $\theta_j$ with law $\nu$, as well as random particle diameters $d_j$ with law $\sigma$. As we shall see, the results of \cite{NT} go through more or less unchanged in the case when $\nu$ is the uniform measure, for all laws $\sigma$ with finite third moments that tend to zero. However, in the case of non-uniform measures $\nu$, the evolution of harmonic measure on the boundary is dominated by a non-trivial deterministic drift of order $d^{2}$, and the stochastic behaviour is seen only as fluctuations about this of order $d^3$.

\subsection{Some examples}
We give two examples of anisotropic growth to illustrate our results. We consider the case 
of slit mappings with deterministic length $d$ for convenience.

\subsubsection{Angles chosen in an interval}
\begin{figure}[h]
    \subfigure[${\rm AHL}(\nu)$ cluster (left) and the corresponding Loewner hull (right).]
        {\epsfig{file=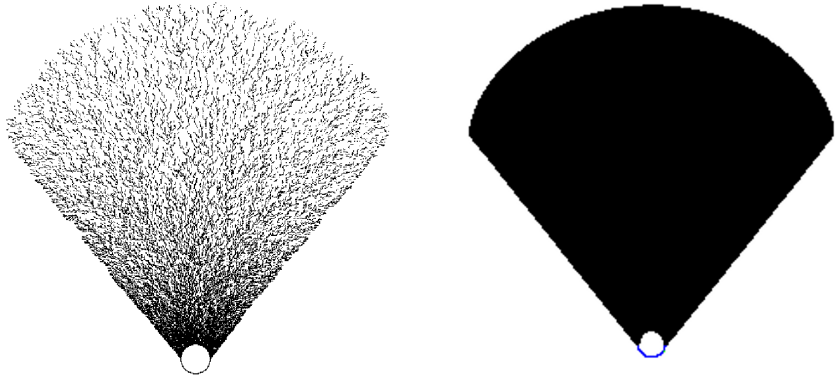,width=12cm}}
   \subfigure[Evolution of harmonic measure on the boundary of ${\rm AHL}(\nu)$ (left) and the solution to the corresponding deterministic ODE (right).]
        {\epsfig{file=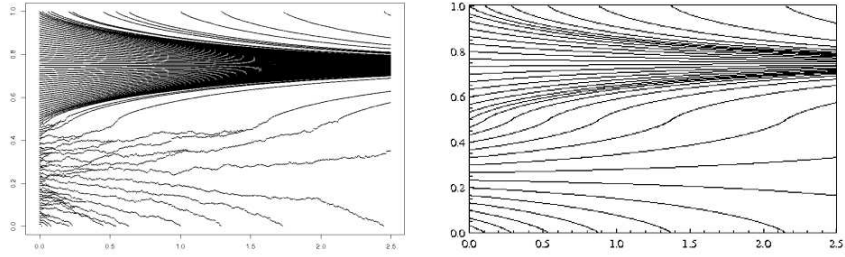,width=12cm}}
  \caption{\textsl{Simulations of ${\rm AHL}(\nu)$ and associated limits, for $d = 0.02$ after $25000$ repetitions, corresponding to $\dx \nu(x) = 2 \chi_{[0,1/2]}\dx x$.}}
\end{figure}

For $\eta \in (0, 1]$, let $\theta_j$ be chosen 
uniformly in $[0, \eta]$. We build clusters $K_n$ as before, at each step 
setting  $d_j=d$ for $j=1,\ldots,n$. For fixed $t \in (0, \iy)$, if $n = \lfloor \ellcap(P)^{-1} t \rfloor$, the hull $K_n$ produced by the discrete 
iteration model then converges (in a sense to be made precise) 
as $d \rightarrow 0$, to the hulls obtained by solving the Loewner equation at time $t$ 
driven by the measure 
\begin{equation*}
\dx \nu(e^{2\pi ix})=\frac{\chi_{[0,\eta]}(x)\dx x}{\eta}.
\end{equation*}
A computation involving the power series 
expansion of the Schwarz-Herglotz kernel $(z+e^{2\pi i x})/(z-e^{2\pi i x})$
shows that the explicit form of the Loewner equation in this case is
\begin{equation}
\partial_t f_t(z)=z f_t'(z)\left(1+\frac{2}{\eta}
\arctan \left[\frac{e^{i \pi \eta}\sin \eta}{z-e^{i \pi \eta}\cos \eta}\right]\right).
\end{equation}
Construct the flow $\Gamma \in D^\circ$ that describes the evolution of harmonic measure on the cluster boundary, with rate $\ellcap(P)^{-1} \asymp d^{-2}$. Then, as $d \rightarrow 0$, $\Gamma \rightarrow \phi$ in $(D^\circ, d_D)$, where $\phi_{(s,t]}(x)$ is the solution to the ordinary differential equation
\[
\dot{\phi}_{(s,t]}(x)=\frac{1}{\pi^2} \log \left|\frac{\sin(\pi \phi_{[s,t)}(x))}
{\sin(\pi (\phi_{(s,t]}(x)-\eta))}\right|
\]
with $\phi_{(s,s]}(x)=x$. In the special case $\eta=1/2$, we obtain the equation
\[
\dot{\phi}_{(s,t]}=\frac{1}{\pi^2} \log |\tan (\pi \phi_{(s,t]}(x))|.
\]
Note the absence of random fluctuations in the region $(1/2,1)$ in the 
simulation in the figure; this phenomenon will be discussed in Section 4.
\subsubsection{Angles chosen from a density with $m$-fold symmetry}

\begin{figure}[h]
    \subfigure[${\rm AHL}(\nu)$ cluster (left) and the corresponding Loewner hull (right).]
        {\epsfig{file=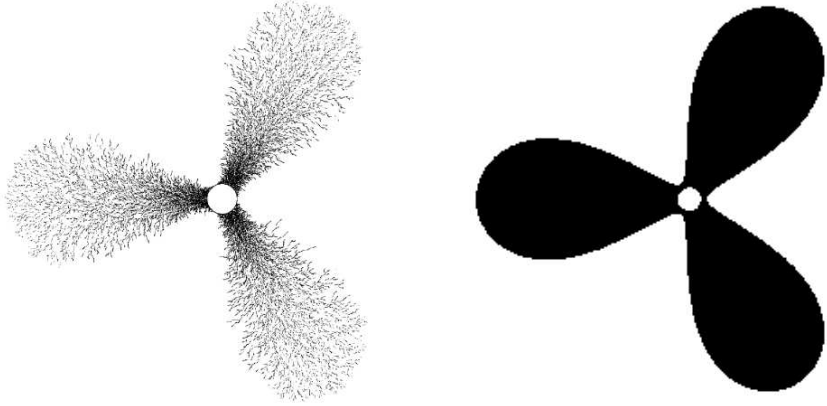,width=12cm}}
   \subfigure[Evolution of harmonic measure on the boundary of ${\rm AHL}(\nu)$ (left) and the solution to the corresponding deterministic ODE (right).]
        {\epsfig{file=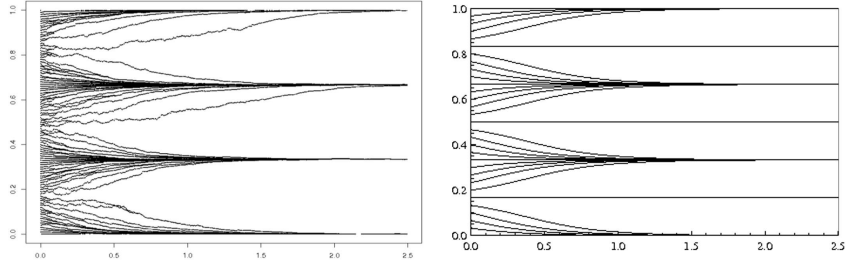,width=12cm}}
  \caption{\textsl{Simulations of ${\rm AHL}(\nu)$ and associated limits, for $d = 0.02$ after $25000$ repetitions, corresponding to $\dx \nu(x)=2\sin^2(3\pi x)\dx x$.}}
\end{figure}

For fixed $m \in \NN$, choose
$\theta_j$ distributed according to the density
\[\dx \nu(e^{2\pi ix})=2\sin^2(m\pi x)\dx x.\]
This type of density with $m$-fold symmetry is 
considered in \cite{PHF} as an example of 
a choice of angular distribution that introduces certain preferred 
directions in the cluster growth. The clusters converge, under the same scaling limits as above, to 
the hulls of the Loewner chain described by the equation
\begin{equation*}
\partial_t f_t(z)=z f_t'(z)\left(1-\frac{1}{z^{m}}\right).
\end{equation*}
In the limit in this case, 
the evolution of harmonic measure on the cluster boundary
is determined by the solutions to the ODE
\begin{equation*}
\dot{\phi}_{(s,t]}(x)=-\frac{1}{2\pi}\sin(2\pi m \phi_{(s,t]}(x)), \quad \phi_{(s,s]}(x)=x.
\end{equation*}
\section{A shape theorem}
In this section, we consider a scaling limit where the particle
sizes converge to zero. The goal is to describe the macroscopic shape of
the limiting cluster, that is, to prove a shape theorem.
This generalizes a result we first learned about from Rohde
\cite{personalRohde}, see also \cite{MS}: simulations of standard $\rm{HL}(0)$ clusters show that if the basic slit length $d$ is chosen to be small,
and the number of compositions is large, then the clusters $K_n$ look rounded. 
In fact, if we let $d \rightarrow 0$ and $n \asymp d^{-2}$, then the laws of resulting $\rm{HL}(0)$ clusters do indeed 
converge to that of a closed disk $c K_0$.

Similarly, comparing the ${\rm AHL}(\nu)$ clusters with the hulls generated by the Loewner equation driven 
by the time-independent measure $\nu$, we see that, as the particle diameters 
$d$ tend to zero and the number of compositions increases at a rate
proportional to $d^{-2}$, the shapes converge (even for random
particle sizes). Indeed, in Theorem \ref{shape} we prove that the discrete 
clusters converge to the Loewner hulls. We begin
with a technical result about solutions to the Loewner-Kufarev equation.
\subsection{Continuity properties of the Loewner equation}
In this section, we show that solutions to the Loewner-Kufarev 
equation (\ref{measureLPDE}) are ``close'' at time $T$ if the driving 
measures are ``close'' in some suitable sense. For conformal mappings, the 
notion of closeness is to be understood in the sense of uniform convergence on 
compact subsets of $D_0$. 

Let $\Sigma$ denote the space of conformal mappings $f: D_0 \rightarrow \CC$ 
with expansions at infinity of the form
\[f(z)=c_1z+c_0+c/z+\cdots, \quad c_1>0,\]
equipped with the topology induced by uniform convergence on 
compact subsets of $D_0$. Denote by $\Pi(\Sigma)$ 
the space of probability measures on $\Sigma$.

In \cite{Bauer}, it is shown that if the Loewner equation is driven by continuous functions that are 
close in the uniform metric, then the corresponding solutions are close as conformal mappings. This was extended to cover 
Skorokhod space functions in \cite{JS}.

The following proposition deals with the case of general driving measures. 
\begin{Pro}
\label{continuity}
Let $0<T< \iy$. Let $\mu^n=\{\mu^n_t\}_{t \geq
  0}, \, n=1,2,\ldots,$ and $\mu=\{\mu_t\}_{t \geq
  0}$ be families of measures 
in $\mathcal{P}$. Let
$m$ denote Lebesgue measure on $[0, \iy)$, and suppose that
the measures $\mu^n_t \times m$ converge weakly on $S=\TT\times [0, T]$
to the measure $\mu_t \times m$ as $n \rightarrow \iy$. 

Then the solutions 
$\{f^n_T\}$ to (\ref{measureLPDE}) corresponding to the sequence $\{\mu^n\}$
converge to $f_T$, the solution corresponding to 
$\mu$, uniformly on compact subsets of $D_0$.
\end{Pro}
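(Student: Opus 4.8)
The plan is to exploit the integral (mild) form of the Loewner--Kufarev equation and a compactness argument in $\Sigma$. First I would rewrite \eqref{measureLPDE} in integrated form: for $z$ in a compact subset of $D_0$,
\[
f^n_T(z)=z+\int_0^T z (f^n_t)'(z)\int_\TT\frac{z+\zeta}{z-\zeta}\,\dx\mu^n_t(\zeta)\,\dx t,
\]
and similarly for $f_T$. The key observation is that the family $\{f^n_t\}_{n,t}$ is uniformly bounded on compact subsets of $D_0$: since each $\mu^n_t$ is a probability measure, one has $\lcap(K^n_t)=\exp\!\big(\int_0^t 1\,\dx s\big)=e^t$, so the capacities are controlled, and by the growth/distortion theorems for the class $\Sigma$ (or a Koebe-type estimate applied to $1/f^n_t(1/z)$) the maps $f^n_t$ and their derivatives $(f^n_t)'$ are uniformly bounded, and uniformly bounded away from $0$ in the appropriate normalization, on each compact $\{|z|\geq 1+\delta\}$, uniformly in $t\in[0,T]$. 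Hence $\{f^n_t\}$ is a normal family.

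Next I would extract, via a diagonal argument over a countable dense set of times together with equicontinuity in $t$ (which follows from the uniform bound on $\partial_t f^n_t$ given by the equation), a subsequence along which $f^n_t\to \tilde f_t$ uniformly on compact subsets of $D_0$, uniformly for $t\in[0,T]$. The point of the hypothesis that $\mu^n_t\times m$ converges weakly to $\mu_t\times m$ on $\TT\times[0,T]$ is precisely to pass to the limit in the double integral: for fixed $z$ with $|z|\geq 1+\delta$, the function $(\zeta,t)\mapsto z\tilde f_t'(z)\frac{z+\zeta}{z-\zeta}$ is bounded and continuous on $\TT\times[0,T]$ (the singularity $\zeta=z$ does not occur since $|z|>1$), and $z(f^n_t)'(z)\to z\tilde f_t'(z)$ uniformly in $(\zeta,t)$; combining uniform convergence of the integrand with weak convergence of the measures lets me conclude
\[
\tilde f_T(z)=z+\int_0^T z\tilde f_t'(z)\int_\TT\frac{z+\zeta}{z-\zeta}\,\dx\mu_t(\zeta)\,\dx t.
\]
Thus $\tilde f_t$ solves \eqref{measureLPDE} with driving measure $\mu$; by the uniqueness of solutions to the Loewner--Kufarev equation with locally integrable $\|\mu_t\|$ (cited from \cite{CM1}), $\tilde f_t=f_t$. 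Since every subsequence has a further subsequence converging to the same limit $f_T$, the full sequence $f^n_T$ converges to $f_T$ uniformly on compact subsets of $D_0$.

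The main obstacle is the interchange of limits in the nonlinear term $z(f^n_t)'(z)\,\dx\mu^n_t(\zeta)\,\dx t$: weak convergence of measures alone does not suffice against an $n$-dependent integrand, so one genuinely needs the \emph{uniform} (in $t$ and $\zeta$) convergence $(f^n_t)'(z)\to \tilde f_t'(z)$ on compacts, which is where the normal-family/equicontinuity-in-$t$ input is essential; establishing that equicontinuity in $t$ (uniformly in $n$) from the a priori bound on $\partial_t f^n_t$ furnished by the equation is the technical heart of the argument. A secondary point requiring care is upgrading convergence of the derivatives from convergence of the functions — this is automatic from Cauchy's integral formula once uniform-on-compacts convergence of $f^n_t$ is in hand, but one must be careful to do it on a slightly smaller compact set and uniformly in $t$.
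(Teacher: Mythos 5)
Your proof is correct, but it takes a genuinely different route from the paper's. The paper works with the backward characteristic flow
\begin{equation*}
\dot h_t(z) = -h_t(z)\int_\TT\frac{h_t(z)+\zeta}{h_t(z)-\zeta}\,\dx\mu_{T-t}(\zeta),\qquad h_0=\operatorname{id},\quad h_T=f_T,
\end{equation*}
fixes $z$ in a compact set, and writes $u(t)=h_t(z)$, $u^n(t)=h^n_t(z)$. It then splits $|u(t)-u^n(t)|$ into a term in which the \emph{same} trajectory $u(s)$ is integrated against $\mu$ versus $\mu^n$ --- here the integrand $(s,x)\mapsto u(s)\frac{u(s)+x}{u(s)-x}$ is a fixed, $n$-independent bounded continuous function on $\TT\times[0,T]$, so the weak-convergence hypothesis applies directly --- and a term in which the same $\mu^n$ sees $u$ versus $u^n$, controlled by a Lipschitz bound on the kernel. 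Gr\"onwall's lemma then gives pointwise convergence on the compact set, upgraded to locally uniform convergence by distortion estimates and Vitali's theorem. This is a direct contraction argument requiring neither a compactness extraction nor a uniqueness theorem. Your proposal instead stays with the forward equation, extracts a subsequential limit $\tilde f_t$ by a normal-family plus Arzel\`a--Ascoli argument (with equicontinuity in $t$ coming from the a priori bound on $\partial_t f^n_t$), passes to the limit in the nonlinear integral by peeling off the $n$-dependent factor $(f^n_t)'(z)$ via that uniform-in-$t$ convergence and then pairing the weak-convergence hypothesis against the resulting $n$-independent continuous integrand, and finally identifies $\tilde f=f$ by uniqueness of the measure-driven Loewner--Kufarev flow. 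The trade-off: the backward-flow trick manufactures a fixed test function for free and so avoids both the compactness extraction and the appeal to uniqueness; your route follows the more generic compactness-and-identification pattern but must import uniqueness for the Loewner--Kufarev equation (standard, and in effect what \cite{CM1} supplies, but an extra ingredient). Both arguments isolate correctly the single point at which the weak-convergence hypothesis is used: integrating a fixed bounded continuous function of $(\zeta,t)$ against $\mu^n_t\times m$.
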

\begin{proof}
The proof is similar to the continuity lemmas of \cite{JS} 
and \cite{Bauer}.

Fix a compact set
$K \subset  D_0$, and let $\epsilon>0$ be given. For $t \in [0, T]$, 
consider the backward Loewner flow 
\begin{equation}
\dot{h}_t(z)=-h_t(z)\int_{\TT}\frac{h_t(z)+\zeta}
{h_t(z)-\zeta}\dx \mu_{T-t}(\zeta),\quad h_0(z)=z.\label{charloewner}
\end{equation}
We then have $h_T(z)=f_T(z)$ (see \cite[Chapter 4]{Lawler}); and a similar statement holds 
for the solutions corresponding to the measures $\mu^n$. We shall
use the convenient shorthand notations
\begin{equation}
v(s,\nu, z)=-z\int_{\TT}\frac{z+\zeta}{z-\zeta}\dx \nu_{T-s}(\zeta)
\label{vshort}
\end{equation}
and 
\begin{equation}
w(x,z)=z\frac{z+x}{z-x}.
\label{wshort}
\end{equation}

For $z \in K$ fixed, set $u(t)=h_t(z)$ and $u^n(t)=h^n_t(z)$. Integrating 
(\ref{charloewner}) with respect to $t$ and using 
$u(0)=u^n(0)$, we obtain, for $t \in [0,T]$,
\begin{align*}
\left|u(t)-u^n(t)\right|& \leq \left|u(t)-u(0)-\int_0^tv(s,\mu^n,u(s))\dx s\right|\\
& +\left|\int_0^t v(s,\mu^n,u(s))\dx s-u^n(t)+u^n(0)\right|\\
& =\left|\int_0^tv(s,\mu,u(s))\dx s-\int_0^tv(s,\mu^n,u(s))\dx s\right|\\&+
\left|\int_0^tv(s,\mu^n,u(s))\dx s-\int_0^tv(s,\mu^n,u^n(s))\dx s\right|.
\end{align*}
The first term may be written out as
\begin{align*}
&\left|\int_0^tv(s,\mu,u(s))\dx s-\int_0^tv(s,\mu^n,u(s))\dx s\right| \\
=&\left|\int_0^t\int_{\TT}u(s)\frac{u(s)+x}{u(s)-x}\dx \mu_{T-s}(x)\dx s 
 -\int_0^t\int_{\TT}u(s)\frac{u(s)+x}{u(s)-x}\dx \mu^n_{T-s}(x)\dx s\right|,
\end{align*}
and since 
the integrand is a continuous function on $\TT \times [0,T]$, 
our assumption of weak convergence implies, in particular, that 
the right-hand side is smaller than $\epsilon$ for all $n\geq N$ for some $N$ 
(which depends on the point $z$).

We estimate the second term by
\begin{multline*}
\left|\int_0^tv(s,\mu^n,u(s))\dx s-\int_0^tv(s,\mu^n,u^n(s))\dx s\right|\\
\leq \int_0^t\int_{\TT}|w(x,u(s))-w(x,u^n(s))|\dx \mu^n_{T-s}(x)\dx s.
\end{multline*}
We now use the inequality
\begin{equation*}
|w(x,z)-w(x,z')|\leq (\sup|\partial_z w(x,z)|)|z-z'|
\end{equation*}
together with standard growth estimates on 
conformal mappings of $D_0$ to obtain that
\begin{equation*}
|w(x,u(s))-w(x,u_n(s))|\leq C(T,K)|u(s)-u^n(s)|
\end{equation*}
for some constant $C(T,K)$ that does not depend on $n$ 
(the lack of normalization of the mappings accounts for the dependence on 
$T$). This in turn leads to the estimate
\begin{align*}
\int_0^t\int_{\TT}|w(x,u(s))-w(x,u^n(s))|\dx \mu^n_{T-s}(x)\dx s &\\
\leq C(T,K)\int_0^t \|\mu^n_{T-s}\| |u(s)-u^n(s)|\dx s&\\
= C(T,K)\int_0^t|u(s)-u^n(s)|\dx s.&
\end{align*}

Putting everything together, we find that
\begin{equation*}
|u(t)-u^n(t)|\leq \epsilon+C(T,K)\int_0^t|u(s)-u^n(s)|\dx s.
\end{equation*}
We are now in a position to apply Gr\"onwall's lemma, and we obtain
\begin{equation*}
|u(t)-u^n(t)|\leq C'(T,K) \epsilon, \quad t \in [0, T].
\end{equation*}
Thus $|u(T)-u^n(T)|<C'(T,K)\epsilon$
for $n\geq N$, and this means that $f^n_T(z)$ converges to $f_T(z)$.

We have thus established the pointwise convergence of $\{f_n\}$ to $f$
on the compact set $K$. 
Since the sequence $\{f_n\}$ is locally bounded by distortion-type 
estimates, it follows from Vitali's theorem that the convergence is in fact 
uniform on $K$, and the proof is complete.
\end{proof}
Let $\mathbb{T}$ be the unit circle and set $S=\mathbb{T}
\times [0, \iy)$. Let $\mathcal{M}=\mathcal{M}(S)$ be the set of 
locally bounded Borel measures on $S$. A sequence $\mu, \mu^n \in
\mathcal{M}$ is said to converge vaguely if 
\[\int \varphi \dx \mu^n \to \int \varphi
\dx \mu, \quad \forall \varphi \in \mathcal{C}_c(S),
\] 
where $\mathcal{C}_c(S)$ is the set of continuous functions in
$S$ with compact support. Weak convergence is defined the
same way with compactly supported continuous $\varphi$ replaced by bounded
continuous $\varphi$. 
A random measure on $S$ is a measurable mapping from some probability space 
into
$\mathcal{M}$. 
In the next section we shall need the following lemma contained in
\cite[Theorem 15.7.6]{Kallenbergmeas}.
\begin{lemma}\label{convergence_in_dist}
Let $\mu,\mu^n$, $n=1,2,\ldots$, be bounded measures on
$S$. The sequence $\mu^n \rightarrow \mu$ with respect to the weak topology
as $n \to \iy$ if and only if
$\mu^n \rightarrow \mu$ in the vague topology
and $\mu^n(S)\rightarrow \mu(S)$.
\end{lemma}
\subsection{Statement and proof of the shape theorem}
In this section, we prove the convergence of the random measures 
generating $\rm{AHL}(\nu)$ to the desired deterministic
measure when the particle diameter $d$ tends to zero and the
number of compositions tends to infinity at a rate proportional to $d^{-2}$. In view of the continuity result of 
the previous section, the weak convergence of the ${\rm AHL}(\nu)$
mappings then follows.

Let $P_1, P_2, \dots$ be chosen to be identical with $\diam(P)=d$. Assume additionally that the particle shape is chosen with capacity $\ellcap(P)$ of order $d^2$ (see \eqref{capvsdiam}). Note that our results can be shown to hold when the $P_j$ are random, under additional conditions that are given in the remark at the end of this subsection. Let $\theta_1, \theta_2, \ldots$ be $\mathbb{T}$-valued random variables with law $\nu$.

\begin{theorem}\label{shape}
Let $\Phi$ denote the solution to the Loewner-Kufarev equation driven by 
the measures $\{\nu_t\}_{t\geq 0}=\{\nu\}_{t\geq 0}$ and 
evaluated at time $T$, for some fixed $T \in (0, \infty)$.

Set $n = \lfloor \ellcap(P)^{-1} T \rfloor$, and define the conformal map
\begin{equation*}
\Phi_n=f_{P_1}^{\theta_1}\circ \cdots \circ f_{P_n}^{\theta_n}.
\end{equation*}
Then $\Phi_n$ converges to $\Phi$ uniformly on compacts almost surely 
as $d \rightarrow 0$. 
\end{theorem}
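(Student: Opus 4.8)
The plan is to realise the cluster map $\Phi_n$ as the time-$T_n$ conformal map of a Loewner--Kufarev chain \eqref{measureLPDE} driven by an explicit random point-mass family, to establish that this family converges to the constant family $\{\nu\}$ in the sense required by Proposition~\ref{continuity}, and then to invoke that proposition. Write $\xi_P$ for the (lifted) driving function of the single particle $P$, as in the discussion around \eqref{HLmeasure}, so that the corresponding unimodular driving point starts at the attachment point $1$; by \eqref{capvsdiam} and the definition of $R(P)$, this driving point stays within distance $O(d)$ of $1$ throughout $[0,\ellcap(P))$. By the construction recalled around \eqref{HLmeasure}, $\Phi_n$ equals $f^d_{T_n}$, where $f^d$ solves \eqref{measureLPDE} with the point-mass family $\mu^d_t=\delta_{\zeta^d(t)}$, where $\zeta^d(t)=e^{i\theta_j}e^{i\xi_P(t-T_{j-1})}$ for $t\in[T_{j-1},T_j)$ and $T_j=j\,\ellcap(P)$, obtained by concatenating the rotated driving functions of the particles $P_1,P_2,\dots$ over the consecutive capacity intervals $[T_{j-1},T_j)$. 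Since $\ellcap(P)\asymp d^2$ by assumption, $T_n=\lfloor\ellcap(P)^{-1}T\rfloor\,\ellcap(P)\to T$ and $0\le T-T_n\le\ellcap(P)\to0$ as $d\to0$.

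The main point is to show that, almost surely, $\mu^d_t\times m\to\nu\times m$ weakly on $S=\TT\times[0,T]$ as $d\to0$. Both measures have total mass $T$ (their time-slices are probability measures), so it suffices that $I^d(\varphi):=\int_0^T\!\int_\TT\varphi\,\dx\mu^d_t\,\dx t\to\int_0^T\!\int_\TT\varphi\,\dx\nu\,\dx t$ for every $\varphi\in C(\TT\times[0,T])$. Since $|\zeta^d(t)-e^{i\theta_j}|=O(d)$, uniform continuity of $\varphi$ lets us replace $\zeta^d(t)$ by $e^{i\theta_j}$, and then $t$ by the left endpoint $T_{j-1}$, at a total cost of $o(1)$:
\[
I^d(\varphi)=\ellcap(P)\sum_{j=1}^{n}\varphi(e^{i\theta_j},T_{j-1})+o(1).
\]
Splitting $[0,T]$ into $M$ equal blocks $[a_{k-1},a_k)$, the contribution of block $k$ is $\ellcap(P)$ times a sum of $\asymp T/(M\ellcap(P))\to\infty$ i.i.d.\ variables $\varphi(e^{i\theta_j},a_{k-1})$; expressing this block average as a ratio $(S_q-S_p)/(q-p)$ of partial sums $S_m=\sum_{j\le m}\varphi(e^{i\theta_j},a_{k-1})$ with $p/q\to a_{k-1}/a_k<1$ and applying the strong law of large numbers, it converges almost surely to $\int_\TT\varphi(\zeta,a_{k-1})\,\dx\nu(\zeta)$, with no subsequence needed. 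Summing over blocks and letting $M\to\infty$ (which controls the error from discretising $t\mapsto\varphi(\cdot,t)$), one obtains $I^d(\varphi)\to\int_0^T\!\int_\TT\varphi\,\dx\nu\,\dx t$ almost surely. Carrying this out for a countable sup-norm-dense family of test functions and using $|I^d(\varphi)-I^d(\psi)|\le T\|\varphi-\psi\|_\infty$ gives the convergence simultaneously for all $\varphi$, almost surely; equivalently one may combine vague convergence with equality of total masses via Lemma~\ref{convergence_in_dist}.

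With this weak convergence established, Proposition~\ref{continuity} yields $f^d_T\to f_T=\Phi$ uniformly on compact subsets of $D_0$, almost surely. It remains to replace $f^d_T$ by $\Phi_n=f^d_{T_n}$. Fix a compact $K\subset D_0$; integrating \eqref{measureLPDE} over $[T_n,T]$ and using the standard distortion bounds for conformal maps of $D_0$ employed in the proof of Proposition~\ref{continuity} to control the integrand uniformly on $K$ over this short time range, we get $\sup_{z\in K}|f^d_{T_n}(z)-f^d_T(z)|\le C(T,K)(T-T_n)\le C(T,K)\ellcap(P)\to0$. Combining the two estimates gives $\sup_{z\in K}|\Phi_n(z)-\Phi(z)|\to0$ almost surely for every compact $K$, which is Theorem~\ref{shape}.

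The step needing the most care is the almost sure weak convergence of the random driving measures: one must run the law of large numbers inside the capacity parametrization, control the vanishing discrepancy between the true particle driving function $\xi_P$ and the idealised point masses $e^{i\theta_j}$, and make the convergence hold uniformly over test functions and throughout the continuum limit $d\to0$ (the ratio-of-partial-sums device handles the latter without passing to a subsequence). A secondary technical point is that Proposition~\ref{continuity} is stated at a fixed terminal time, so the discrepancy $T-T_n$ has to be absorbed separately as above.
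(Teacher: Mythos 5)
Your proof follows the same overall strategy as the paper's: realize $\Phi_n$ as a time-evaluated solution of the Loewner--Kufarev equation driven by a concatenated point-mass family, show that the random driving measures converge weakly (via vague convergence of $\langle\mu,\varphi\rangle$ on a countable dense family plus mass convergence, i.e.\ Lemma~\ref{convergence_in_dist}), and then invoke Proposition~\ref{continuity}. The argument is correct.

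Two places where you are in fact more careful than the paper. First, the paper applies the strong law of large numbers directly to $X_k=\varphi(\theta_k,T_k)-\int\varphi(\theta,T_k)\,\dx\nu$, but since $T_k=k\,\ellcap(P)$ varies with $d$, these form a triangular array rather than a single sequence; your blocking device --- freezing the time argument of $\varphi$ at block endpoints $a_{k-1}$, so the summands become a genuinely $d$-independent i.i.d.\ sequence, and then expressing block averages as ratios $(S_q-S_p)/(q-p)$ of partial sums with $p/q\to a_{k-1}/a_k<1$ --- repairs this cleanly and also handles the continuum parameter $d\to0$ without passing to subsequences. Second, the paper passes from weak convergence on $\TT\times[0,T]$ directly to Proposition~\ref{continuity} without commenting on the fact that $\Phi_n=f^d_{T_n}$ rather than $f^d_T$ (the paper's $\mu^P_t$ is the zero measure on $(T_n,T]$, so $f^d_t$ is constant there and $f^d_T=\Phi_n$, but this is left implicit and also strains the hypothesis that the $\mu^n_t$ be probability measures); you instead keep a unit-mass driving family on all of $[0,T]$ and bound $\sup_K|f^d_T-f^d_{T_n}|\le C(T,K)\,\ellcap(P)$ by a short Gr\"onwall-free estimate, which is a valid alternative resolution. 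These are technical refinements rather than a different route; the key lemmas, the decomposition of $\langle\mu^P-\mu,\varphi\rangle$, and the role of uniform continuity of $\varphi$ and the $O(d)$ wander of the particle driving function are the same.
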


\begin{proof}
Let $\epsilon>0$ be given. For $k=1, \ldots, n$, set
\[
T_k=k\ellcap(P), 
\] 
and
\[
 \Xi_n(t)=
 \sum_{k=1}^{n}\chi_{[T_{k-1}, T_k)}(t)\,\xi_k(t),
\]
where $\xi_k(t), \, t\in [T_{k-1}, T_k)$, is the (rotated)
driving function for the particle $P_k$. 
We set $\xi^n(t)=\exp(i \Xi_n(t))$.
Then $\delta_{\xi^n(t)}$ is the measure that drives the evolution of 
the $\rm{AHL}$ clusters. That is, the mapping 
$\Phi_n$ is the solution to the Loewner-Kufarev equation
\begin{equation}
\partial_t f_t(z)=zf_t'(z)\int_{\TT}\frac{z+\zeta}{z-\zeta}
\delta_{\xi^n(t)}(\zeta)
\label{deltaloewner}
\end{equation}
with $f_0(z)=z$, evaluated at time $t=T_n$.
Integrating with respect to Lebesgue measure in time, $m$, we see that
we need to show that the random
measures 
\begin{equation*}
\mu^P=\delta_{\xi^n(t)} \times m_{[0,T_n]} \in \mathcal{M}(S)
\end{equation*}
converge almost surely to 
$\mu=\nu\times m_{[0,T]}$ as $d \rightarrow 0$ with respect
to the weak topology. Note that
$\mu^P(S)=T_n \to T= \mu(S)$. By Lemma \ref{convergence_in_dist} it remains to prove convergence of the random variables
 $\langle \mu^P, \varphi \rangle$ to $\langle \mu,\varphi \rangle$, as 
$d \to 0$, for $\varphi \in \mathcal{C}_c(S)$. 

As before, we identify the circle with the interval $[0,1)$, that is, 
$S=[0, 1)\times [0,\iy)$. 
For $\varphi \in \mathcal{C}_c(S)$, 
\begin{align}
|\langle \mu, \varphi \rangle-\langle \mu^P,\varphi \rangle|  =&
\left|\int_0^{T}\int_{\TT}\varphi(\theta,t)\dx \mu-\sum_{k=1}^{n}\int_{T_{k-1}}^{T_k}\varphi(\xi_k(t),t)\dx m(t)\right| \nonumber \\ \nonumber
 \leq& 
\left|\int_0^{T}\int_{\TT}\varphi(\theta,t)\dx \mu-\ellcap(P)\sum_{k=1}^n\varphi(\theta_k,T_k)
\right|\\ 
+&\left|\ellcap(P)\sum_{k=1}^n\varphi(\theta_k,T_k)-\sum_{k=1}^n
\int_{T_{k-1}}^{T_k}\varphi(\xi_k(t),t)\dx m(t)\right|.
\label{measureaction}
\end{align}
The second term on the right hand side can be bounded by 
\[\sum_{k=1}^n\int_{T_{k-1}}^{T_k}
|\varphi(\theta_k,T_k)-\varphi(\xi_k(t),t)|\dx m(t).\]
 Note that
$T_k-T_{k-1}=\ellcap(P)$ and by \eqref{capvsdiam},
\[\sup_{T_{k-1} \le t <
  T_k}|\xi_k(t)-e^{2\pi i \theta_k}|\leq C \textrm{diam}(P).\]
Hence
\[
\max_{1\leq k \leq n}\, \sup_{T_{k-1}\leq t\leq T_k}|
\xi_k(t)-e^{2\pi i \theta_k}| \rightarrow 0
\]
almost surely as $d \to 0$. 
Since $\varphi$ is compactly supported, and hence uniformly continuous on $S$,
we have
\[
\max_{1\leq k \leq n}\, \sup_{T_{k-1}\leq t\leq T_k}
|\varphi(\theta_k,T_k)-\varphi(\xi_k(t),t)|<\epsilon,
\]
for $d$ sufficiently small. It follows that, almost surely,
\[\sum_{k=1}^n\int_{T_{k-1}}^{T_k}
|\varphi(\xi_k(t),t)-\varphi(\theta_k,T_{k})|\dx m(t)<
\epsilon \sum_{k=1}^n\int_{T_{k-1}}^{T_k}\dx m(t) \le c\,\epsilon\]
as soon as $d$ is sufficiently small.

We turn to the first term on the right hand side in \eqref{measureaction}. 
We apply the 
strong law of large numbers for independent 
random variables (see for instance \cite[Corollary 4.22]{Kallenberg})
to  
$X_{k}=\varphi(\theta_k, T_k)-\int_0^1\varphi(\theta, T_k)\dx \nu$ to 
obtain 
\[
\ellcap(P)\sum_{k=1}^nX_k \rightarrow 0 
\]
almost surely. As $\ellcap(P)=T_k-T_{k-1}$, it follows that
\[
\ellcap(P)\sum_{k=1}^n\left(\int_0^1\varphi(\theta,T_k)\dx \nu\right)
\rightarrow \int_{0}^{T}\int_0^1\varphi(\theta, t)\dx \mu,\]
almost surely, by continuity of $\varphi$.

Hence the sequence of random variables
$\langle \mu^P, \varphi\rangle$ converges 
almost surely to $\langle \mu, \varphi \rangle$ for each fixed $\varphi \in 
\mathcal{C}_c(S)$. Note that $\mathcal{C}_c(S)$ is separable. Thus, for  
$\varphi$ in a countable dense subset of $\mathcal{C}_c(S)$, we may take the 
intersection of the corresponding sets of full measure to obtain a set 
of full measure on which $\langle \mu^P, \varphi\rangle$ converges to 
$\langle \mu, \varphi\rangle$ for every $\varphi \in \mathcal{C}_c(S)$. Hence, we have
almost sure convergence of $\mu^P$ to $\mu$ with respect to the vague 
topology. Consequently, by Lemma 
\ref{convergence_in_dist} the random
measures $\mu^P$ converge almost surely to $\mu$ 
with respect to the weak topology. 

In view of Proposition 
\ref{continuity}, the corresponding conformal 
mappings converge uniformly on compact sets, and
the proof is complete.
\end{proof}


\begin{remark}
The setup in the theorem can easily be adapted to allow for random particle 
sizes tending to zero in probability. For example, we could take 
$\ellcap(P_k^n)=\lambda_k/n$ for bounded i.i.d.\ random variables $\lambda_k$
and obtain almost sure convergence of the corresponding conformal 
mappings. The proof is essentially the same, except that we apply an
ergodic theorem
\cite[Theorem 1]{MB} instead of the law of large numbers. We can 
also relax the condition on the sequence $\lambda_k$ to square-integrability, 
and then obtain convergence in law of the conformal mappings, by adapting the
proof in \cite{MB} appropriately.
\end{remark}
\begin{remark}
Instead of choosing $\theta_j$ as i.i.d.\ random variables, one 
could also take $\{\theta_j\}_j$ to be a Markov chain satisfying 
some natural conditions. 
By examining our proof, and applying a stronger version of
\cite[Theorem 1]{MB}, we obtain a result similar to Theorem 2, 
with the limiting $\nu$ uniform on $\TT$. This 
is a consequence of the fact that the invariant
measure on $\TT$ under rotation is Lebesgue measure. 
\end{remark}


\section{The evolution of harmonic measure on the cluster boundary}
In this section we establish a scaling limit for the evolution of harmonic measure on the cluster boundary. We show that it can be approximated by the solution to a deterministic ordinary differential equation related to the Loewner equation and we also characterise the stochastic fluctuations around the deterministic limit flow.

For notational simplicity, we assume that the diameters $\{d_j\}$ of the particles are constant and equal to some $d>0$ which tends to zero to obtain limit results. All the proofs can be directly adapted for $\{d_j\}$ with laws $\sigma$ with finite third moment $\sigma_3 \rightarrow 0$. We also assume that $\nu$ has density $h_{\nu}$ on $\RR$, periodic with period 1, which is twice differentiable. This restriction is purely for technical reasons and, through smoothing, any non-atomic Borel measure can be sufficiently well approximated by a measure with a twice differentiable density.

Recall the construction of the map $\gamma_P$ and the flow $(\Gamma_I: I \subseteq [0,\infty))$ from Subsection \ref{BrownianWeb}. Define the function $\beta_{\nu}$ and the constant $\rho(P)$ by
\begin{align*}
\beta_{\nu}(x) &= \int_0^1 \tilde{\gamma}_{P}(x-z) h_\nu(z) \dx z,\\
1 &= \rho(P)\int_0^1  \tilde{\gamma}_{P}(z)^2 \dx z, 
\end{align*}
where $\tilde{\gamma}_{P}^{\pm}(x) = \gamma_{P}^\pm(x)-x$. It is shown in \cite{NT} that $\rho(P) \asymp d^{-3}$.

Suppose that the Poisson process $\{T_i\}$, used in the construction of $\Gamma_I$, has rate $\ellcap(P)^{-1}$ and let $X \in D^\circ$ be a lifting of $\Gamma$ onto the real line. Then for fixed $e=(s,x) \in [0,\infty) \times \RR$,  $X_t^{e,\pm} = X_{(s,t]}^{\pm}(x)$ satisfies the integral equation
\begin{align*}
X_t^{e,\pm} &=  x + \int_{(s,t] \times [0,1)} \tilde{\gamma}_{P}^{\pm}(X_r^{e,\pm}-z)\mu(\dx r,\dx z)  \\
&= x + M_{ts}^{\pm} + \ellcap(P)^{-1} \int_{(s,t]} \beta_{\nu}(X_r^{e,\pm}){\rm d}r, \quad t \geq s
\end{align*}
where $\mu$ is a Poisson random measure (see \cite{Sato}) on $[0, \infty) \times [0,1)$, equipped with the Borel $\sigma$-algebra, with intensity $\ellcap(P)^{-1}h_\nu(z)\dx z \dx r$, and where $M_{ts}^{\pm}$ is a martingale satisfying
$$
M_{ts}^{\pm} = \int_{(s,t] \times [0,1)} \tilde{\gamma}_{P}^{\pm}(X_r^{e,\pm}-z)(\mu(\dx r,\dx z)-\ellcap(P)^{-1} h_\nu(z)\dx z \dx r). 
$$
In what follows, we suppress the superscripts $e,\pm$.

Recall (see Section \ref{Loewner}) that there are natural sequences of particles $P$ for which $\ellcap(P) \asymp d^2$. We assume that this holds in what follows. It is also shown in \cite{NT} that there exists some universal constant $0<C_3 < \infty$ such that
$$
\left | \int_0^1 \tilde{\gamma}_{P}(z) \dx z \right | \leq C_3 d^2,
$$
and so, by restricting to a subsequence if necessary, we assume that 
$$
\ellcap(P)^{-1} \int_0^1 \tilde{\gamma}_{P}(z) \dx z \rightarrow c_0 
$$
for some $c_0 \in \mathbb{R}$. Note that for symmetric particles, $\int_0^1 \tilde{\gamma}_{P}(z) \dx z = 0$ in which case $c_0=0$.

\begin{Pro}
As $d \rightarrow 0$, $\left | \ellcap(P)^{-1} \beta_{\nu}(x) - b(x) \right | \rightarrow 0$, uniformly in $x$, where
\begin{equation*}
b(x) =  c_0 h_\nu(x) + \frac{1}{2 \pi} \int_0^1 \cot(\pi z)(h_\nu(x-z) - h_\nu(x)) \dx z.
\end{equation*}
Furthermore, if $P$ is chosen so that
$$
d^{-1/2} \left | \ellcap(P)^{-1} \int_0^1 \tilde{\gamma}_{P}(z) \dx z - c_0 \right | \rightarrow 0,
$$
as $d \rightarrow 0$, then $d^{-1/2}\left | \ellcap(P)^{-1} \beta_{\nu}(x) - b(x) \right | \rightarrow 0$, uniformly in $x$, as $d \rightarrow 0$.
\end{Pro}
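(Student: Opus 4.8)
The proposition asserts that the rescaled drift $\ellcap(P)^{-1}\beta_\nu(x)$ converges uniformly to $b(x)$, at speed $o(1)$ in general and at speed $o(d^{1/2})$ under the extra hypothesis on the first moment of $\tilde\gamma_P$. The key is to split $\beta_\nu$ into the piece coming from the ``mean displacement'' of a particle and the piece coming from the fluctuation of $\gamma_P$ around the identity, and to use two facts from \cite{NT} about $\gamma_P$: that $\int_0^1\tilde\gamma_P(z)\,\dx z=\mathcal{O}(d^2)$ with $\ellcap(P)\asymp d^2$, and that $\tilde\gamma_P$ is, after rescaling by $\ellcap(P)$, a good approximation to the $\cot$-kernel appearing in the lifted Loewner equation \eqref{Hilbert}.

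First I would write, using the definition $\beta_\nu(x)=\int_0^1\tilde\gamma_P(x-z)h_\nu(z)\,\dx z=\int_0^1\tilde\gamma_P(z)h_\nu(x-z)\,\dx z$,
\[
\ellcap(P)^{-1}\beta_\nu(x)=\ellcap(P)^{-1}\int_0^1\tilde\gamma_P(z)\bigl(h_\nu(x-z)-h_\nu(x)\bigr)\,\dx z+\Bigl(\ellcap(P)^{-1}\int_0^1\tilde\gamma_P(z)\,\dx z\Bigr)h_\nu(x).
\]
The second term converges to $c_0 h_\nu(x)$ by the hypothesis $\ellcap(P)^{-1}\int_0^1\tilde\gamma_P(z)\,\dx z\to c_0$ (uniformly in $x$, since $h_\nu$ is bounded), and under the refined hypothesis converges at speed $o(d^{1/2})$. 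So the whole matter reduces to showing
\[
\ellcap(P)^{-1}\int_0^1\tilde\gamma_P(z)\bigl(h_\nu(x-z)-h_\nu(x)\bigr)\,\dx z\longrightarrow\frac{1}{2\pi}\int_0^1\cot(\pi z)\bigl(h_\nu(x-z)-h_\nu(x)\bigr)\,\dx z
\]
uniformly in $x$. For this I would invoke the identification, already used implicitly in Subsection~\ref{BrownianWeb} and proved in \cite{NT}, of $\ellcap(P)^{-1}\tilde\gamma_P$ with an approximate $\tfrac{1}{2\pi}\cot(\pi\cdot)$ kernel: $\gamma_P$ is the time-$\ellcap(P)$ map of the lifted Loewner flow \eqref{Hilbert} driven by the single-particle driving measure, so to leading order $\tilde\gamma_P(z)\approx\ellcap(P)\cdot\tfrac{1}{2\pi}\cot(\pi z)$ away from the (shrinking) support of the particle near $0$, with error controlled because $\ellcap(P)\asymp d^2$ and the particle support has diameter $\asymp d$. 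The factor $h_\nu(x-z)-h_\nu(x)$, which vanishes to first order at $z=0$ since $h_\nu\in C^2$, is exactly what is needed to tame the non-integrable singularity of $\cot(\pi z)$ at $z=0$ and to absorb the discrepancy between $\tilde\gamma_P$ and the $\cot$ kernel on the small arc near the attachment point: there $|h_\nu(x-z)-h_\nu(x)|\le\|h_\nu'\|_\infty|z|$, and the arc has length $\mathcal{O}(d)$, while $|\ellcap(P)^{-1}\tilde\gamma_P(z)|$ and $|\cot(\pi z)|$ are both $\mathcal{O}(1/|z|)$ or smaller there in an averaged sense.

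Quantitatively I would split the $z$-integral at $|z|\sim d^{\alpha}$ for a suitable exponent $\alpha\in(0,1)$. On the outer region $|z|\gtrsim d^\alpha$ one uses the uniform (in $x$ and in this region) estimate $|\ellcap(P)^{-1}\tilde\gamma_P(z)-\tfrac{1}{2\pi}\cot(\pi z)|\le C\,\ellcap(P)/z^{2}\cdot\text{(something)}$, or more simply a bound of the form $\le C d^2/|z|^3$ coming from comparing the Loewner flow over a short time to its linearization, multiplied by $|h_\nu(x-z)-h_\nu(x)|\le\|h_\nu'\|_\infty|z|$, giving a contribution $\mathcal{O}(d^2 d^{-\alpha})$. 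On the inner region $|z|\lesssim d^\alpha$ one bounds each of the two kernels separately: $\int_{|z|\lesssim d^\alpha}|\cot(\pi z)|\,|z|\,\dx z=\mathcal{O}(d^\alpha)$, and similarly $\ellcap(P)^{-1}\int_{|z|\lesssim d^\alpha}|\tilde\gamma_P(z)|\,|z|\,\dx z=\mathcal{O}(d^\alpha)$ using $|\tilde\gamma_P(z)|\le C\ellcap(P)/|z|$ on the part of this region outside the particle and the crude bound $|\tilde\gamma_P|\le C d$ (from \eqref{capvsdiam}, $R(P)\asymp d$) on the $\mathcal{O}(d)$-arc inside it. Optimizing gives $\alpha=2/3$, a total error $\mathcal{O}(d^{2/3})=o(1)$, which already gives the first assertion; and since $2/3>1/2$ this same error is $o(d^{1/2})$, so the refined assertion follows the moment the refined hypothesis upgrades the control of the second ($c_0 h_\nu$) term to $o(d^{1/2})$.

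\textbf{Main obstacle.} The delicate point is not the outer region, which is a routine Gr\"onwall-type comparison of the Loewner flow to its linearization, but the behaviour on the inner arc that actually contains the particle: there $\tilde\gamma_P$ is genuinely not close to $\tfrac{1}{2\pi}\cot(\pi z)$, and one must show that the \emph{averaged} contribution of this arc, weighted by the vanishing factor $h_\nu(x-z)-h_\nu(x)$, is still negligible and, crucially, that the bound is \emph{uniform in $x$}. This requires a clean quantitative estimate on $\int_{\text{arc}}|\tilde\gamma_P(z)|\,\dx z$ — essentially that $\tilde\gamma_P\in L^1$ with norm $\mathcal{O}(d^2|\log d|)$ or better, which follows from the harmonic-measure / capacity estimates in Section~\ref{Loewner} together with the basic slit comparison — and on keeping the $C^2$ regularity of $h_\nu$ in play so that the first-order cancellation at $z=0$ is exploited correctly. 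Everything else is bookkeeping that parallels the corresponding estimates in \cite{NT}.
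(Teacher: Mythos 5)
Your proposal is correct and follows essentially the same route as the paper: decompose $\beta_\nu(x) = h_\nu(x)\int_0^1\tilde\gamma_P\,\dx z + \int_0^1\tilde\gamma_P(z)(h_\nu(x-z)-h_\nu(x))\,\dx z$, handle the first piece via the hypothesis on $c_0$, and for the second piece compare $\tilde\gamma_P$ to the $\cot$-kernel using the Lawler-type estimate on $cd\le z\le 1-cd$ together with $\|\tilde\gamma_P\|_\infty \lesssim d$ on the inner arc, exploiting $|h_\nu(x-z)-h_\nu(x)|\le\|h_\nu'\|_\infty|z|$ to tame the singularity. The only notable differences are cosmetic and slightly suboptimal: the paper cuts at $|z|\sim cd$ rather than at $d^{2/3}$ (your "optimization" to $\alpha=2/3$ actually contradicts your own outer/inner balance $d^{2-\alpha}$ vs.\ $d^\alpha$, which gives $\alpha=1$), and the paper works directly from the $\ellcap(P)^{-1}|\tilde\gamma_P(z)-\tfrac{\ellcap(P)}{2\pi}\cot\pi z|\lesssim d/\sin^2\pi z$ bound rather than the unjustified $d^2/|z|^3$ you float as an alternative; either way the resulting error is $O(d|\log d|)$ or $O(d^{2/3})$, both $o(d^{1/2})$, so the conclusion is the same.
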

\begin{proof}
It is shown in Section 3.5 of Lawler \cite{Lawler} that there exists some universal constant $c < \infty$ such that if
$cd \leq z \leq 1 - cd$, then
\[
\left | \tilde{\gamma}_P(z) - \frac{\ellcap(P)}{2 \pi} \cot (\pi z) \right | \leq \frac{c d \ellcap(P)}{2 \pi \sin^2(\pi z)}.
\]
From this is can be deduced that there exists some $c'>0$, such that $\| \tilde{\gamma}_P\|_\infty < c'd$. 

Let us write $C(P)=\ellcap(P)$.
Now, 
$$
\beta_\nu(x) = h_\nu(x) \int_0^1 \tilde{\gamma}_P(z) \dx z + \int_0^1 \tilde{\gamma}_P(z) \left ( h_\nu(x-z) - h_\nu(x) \right ) \dx z,
$$
and so, if $d$ is sufficiently small that $cd<1$ then 
\begin{align*}
 &\left | \frac{\beta_{\nu}(x)}{C(P)} - b(x) \right |  \\
       \leq &\left | C(P)^{-1} \int_0^1 \tilde{\gamma}_P(z) \dx z - c_0 \right ||h_\nu(x)|\\
        &+ C(P)^{-1}\int_{- cd}^{cd} |\tilde{\gamma}_P(z)| |h_\nu(x-z) - h_\nu(x)| \dx z \\
        &+ C(P)^{-1}\int_{cd}^{1-cd} \left | \tilde{\gamma}_P(z) - \frac{C(P)}{2 \pi} \cot (\pi z) \right || h_\nu(x-z) - h_\nu(x) | \dx z \\
        &+ C(P)^{-1}\int_{- cd}^{cd} \frac{C(P)}{2 \pi} |\cot (\pi z)|| h_\nu(x-z) - h_\nu(x) | \dx z \\
        \leq &\|h_\nu\|_\infty \left | C(P)^{-1} \int_0^1 \tilde{\gamma}_P(z) \dx z - c_0 \right | 
        + \|  h_\nu' \|_\infty c'd C(P)^{-1}2(cd)^2 \\
        &+ \frac{cd \|h'_\nu\|_\infty \log \sin (\pi cd)}{\pi^2} 
        + \frac{cd \|h_\nu'\|_\infty}{\pi} \sup_{z \in (-cd,cd)} |z \cot (\pi z)|.
\end{align*}
\end{proof}
Note that $\int_0^1 \cot(\pi z)(h_\nu(x-z) - h_\nu(x)) \dx z$ is the Hilbert transform of $h_\nu$, as defined in Section
\ref{Loewner}. In particular, this implies that $b(x)$ is constant only when $h_\nu$ is the uniform density on the circle. It is for this reason that the behaviour in the uniform case is very different to the non-uniform case.  

Define $\phi \in D^{\circ}$ to be the solution to the ordinary differential equation
\begin{equation}
\dot{\phi}_{(s,t]}(x) = b(\phi_{(s,t]}(x)) \mbox{ for } t \geq s, \quad \phi_{(s,s]}(x) = x.
\label{floweq}
\end{equation}
We shall prove that the boundary flow converges to the flow determined by \eqref{floweq}.
Note that away from the support of $h_{\nu}$, this equation coincides
with the lifted Loewner ODE
\[
\partial_t\gamma_t(x)=\frac{1}{2\pi}\int_0^1\cot(\pi (\gamma_t(x)-z))h_{\nu}(z)dz.
\]
However, on the support of $\nu$, where we have to interpret the 
integral as a principal value, we get an additional drift term 
$c_0h_{\nu}(\gamma_t)$ in the right hand side. 
In the case of symmetric particles, the drift vanishes
everywhere, and the resulting flow is governed by the extended Loewner 
flow given by
\[\partial_t\gamma_t(x)=H[\nu](\gamma_t(x)).\]

\begin{Pro}
\label{mbounds}
 For all $T>s$, 
 $$
 \EE \left ( (\sup_{s < t < T} |M_{ts}|)^2 \right ) \leq 4 \|h_\nu \|_{\infty} \ellcap(P)^{-1}\rho(P)^{-1} (T-s).
 $$
 Hence, for all $\epsilon > 0$, 
 $$
 \PP \left ( \sup_{s < t < T} |M_{ts}| > \epsilon \right ) \rightarrow 0
 $$
 as $d \rightarrow 0$.
\end{Pro}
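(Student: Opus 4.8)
The plan is to combine Doob's $L^2$ maximal inequality with the $L^2$-isometry for stochastic integrals against a compensated Poisson random measure. Since the excerpt already records that, for fixed $e=(s,x)$, the process $t\mapsto M_{ts}^{\pm}$ is a c\`adl\`ag martingale (with respect to the filtration generated by $\mu$) vanishing at $t=s$, Doob's $L^2$ inequality immediately gives
\[
\EE\left(\left(\sup_{s<t<T}|M_{ts}|\right)^2\right)\leq 4\,\EE\left(M_{Ts}^2\right),
\]
so it suffices to estimate the second moment at the terminal time.

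Next I would use that $r\mapsto X_r^{e,\pm}$ is adapted and that the integrand $(r,z)\mapsto\tilde{\gamma}_P^{\pm}(X_r^{e,\pm}-z)$ is predictable and bounded (by $\|\tilde{\gamma}_P\|_{\infty}\le c'd$, from the estimate established in the proof of the previous proposition), so the compensated integral defining $M_{ts}$ is a genuine square-integrable martingale and the Poisson It\^o isometry applies:
\[
\EE\left(M_{Ts}^2\right)=\EE\left(\int_{(s,T]\times[0,1)}\tilde{\gamma}_P^{\pm}(X_r^{e,\pm}-z)^2\,\ellcap(P)^{-1}h_\nu(z)\,\dx z\,\dx r\right).
\]
Bounding $h_\nu(z)\le\|h_\nu\|_{\infty}$ and then, for each fixed $r$, applying the change of variables $z\mapsto X_r^{e,\pm}-z$ together with the $1$-periodicity of $\tilde{\gamma}_P^{\pm}$ (legitimate inside the Lebesgue integral, since $\tilde{\gamma}_P^{+}$ and $\tilde{\gamma}_P^{-}$ agree a.e.), we get $\int_0^1\tilde{\gamma}_P^{\pm}(X_r^{e,\pm}-z)^2\,\dx z=\int_0^1\tilde{\gamma}_P(z)^2\,\dx z=\rho(P)^{-1}$ by the defining relation of $\rho(P)$. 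Hence $\EE(M_{Ts}^2)\le\|h_\nu\|_{\infty}\ellcap(P)^{-1}\rho(P)^{-1}(T-s)$, and combining with Doob yields the stated bound. For the convergence statement, Chebyshev's inequality gives
\[
\PP\left(\sup_{s<t<T}|M_{ts}|>\epsilon\right)\le\frac{4\|h_\nu\|_{\infty}\,\ellcap(P)^{-1}\rho(P)^{-1}(T-s)}{\epsilon^2},
\]
and since $\ellcap(P)\asymp d^2$ while $\rho(P)\asymp d^{-3}$, the product $\ellcap(P)^{-1}\rho(P)^{-1}\asymp d\to0$ as $d\to0$, so the right-hand side vanishes in the limit.

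The main obstacle is the middle step: rigorously justifying that $M_{ts}$ is a square-integrable martingale to which the isometry applies, which amounts to checking predictability and integrability of $(r,z)\mapsto\tilde{\gamma}_P^{\pm}(X_r^{e,\pm}-z)$ and hence requires knowing that $X^{e,\pm}$ is a well-defined adapted solution of the integral equation; the harmless identification of $\tilde{\gamma}_P^{+}$ with $\tilde{\gamma}_P^{-}$ under Lebesgue integration should also be noted. Everything else is a routine application of standard martingale inequalities and the capacity/rate asymptotics already recorded above.
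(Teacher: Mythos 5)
Your proposal matches the paper's proof essentially step for step: Doob's $L^2$ maximal inequality reduces the problem to $\EE(|M_{Ts}|^2)$, the Poisson It\^o isometry gives the explicit second moment, bounding $h_\nu$ by $\|h_\nu\|_\infty$ and using translation invariance of $\tilde{\gamma}_P$ (periodicity) together with the defining relation of $\rho(P)$ yields the stated bound, and Markov/Chebyshev plus $\ellcap(P)^{-1}\rho(P)^{-1}\asymp d\to0$ gives the convergence in probability. Your additional remarks on predictability, integrability, and the harmless identification of $\tilde{\gamma}_P^+$ with $\tilde{\gamma}_P^-$ under Lebesgue integration are correct and only make explicit what the paper leaves implicit.
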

\begin{proof}
 Since, for any fixed $(s,x) \in [0,\infty) \times \RR$, the processes $M_{ts}$ are martingales, by Doob's $L^2$ inequality, for all $T>s$,
\begin{align*}
 \EE \left ( (\sup_{s < t < T} |M_{ts}|)^2 \right ) &\leq 4 \EE (|M_{Ts}|^2) \\
     &= 4 \int_s^T \int_0^1 \EE (\tilde{\gamma}_{P}(X_r-z)^2) \ellcap(P)^{-1} h_\nu(z)\dx z \dx r\\
     &\leq 4 \ellcap(P)^{-1} \|h_\nu\|_\infty (T-s) \int_0^1 \tilde{\gamma}_{P}(z)^2 \dx z \\
     &= 4 \|h_\nu \|_{\infty} \ellcap(P)^{-1}\rho(P)^{-1} (T-s).
\end{align*}

The second result follows from Markov's inequality and the assymptotic behaviour of $\rho(P)$ and $\ellcap(P)$.
\end{proof}

Recall the definition of $\phi$ as the solution of \eqref{floweq}.
\begin{theorem}
As $d \rightarrow 0$,
$$
d_D(X, \phi) \rightarrow 0,
$$
in probability.
\end{theorem}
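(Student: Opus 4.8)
The plan is to compare the flow $X$ with the deterministic flow $\phi$ orbit by orbit via a Gr\"onwall estimate, and then to promote this to convergence in the metric $d_D$. Fix $T\in(0,\infty)$ and a starting point $e=(s,x)$, and set $D_t=X_t^{e,+}-\phi_{(s,t]}(x)$; the argument for $X_t^{e,-}$ is the same since $\phi_{(s,\cdot]}$ is continuous. Subtracting the integral equation for $X_t$ from the integrated form of \eqref{floweq},
\[
D_t = M_{ts}^{+} + \int_{(s,t]}\bigl(\ellcap(P)^{-1}\beta_{\nu}(X_r) - b(\phi_{(s,r]}(x))\bigr)\dx r,
\]
and the integrand is bounded by $\sup_{y}|\ellcap(P)^{-1}\beta_{\nu}(y)-b(y)|+|b(X_r)-b(\phi_{(s,r]}(x))|$. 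The first Proposition of this section shows $\epsilon_d:=\sup_y|\ellcap(P)^{-1}\beta_\nu(y)-b(y)|\to 0$, while $b$ is Lipschitz (differentiate under the integral sign in the Hilbert-transform term and use the assumed smoothness of $h_\nu$). Thus $|D_t|\le \sup_{s<u\le T}|M_{us}^+| + \epsilon_d(T-s) + L\int_s^t|D_r|\dx r$, and Gr\"onwall's lemma gives
\[
\sup_{s\le t\le T}|D_t| \le \bigl(\sup_{s<u\le T}|M_{us}^+| + \epsilon_d(T-s)\bigr)e^{L(T-s)}.
\]
By Proposition \ref{mbounds}, together with $\ellcap(P)\asymp d^2$ and $\rho(P)\asymp d^{-3}$ (so that $\ellcap(P)^{-1}\rho(P)^{-1}\asymp d$), the right-hand side tends to $0$ in probability for each fixed $e$.

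The next step is to make this uniform in the starting point. Using the equivariance $X_{(s,t]}(x+1)=X_{(s,t]}(x)+1$ and its analogue for $\phi$, it is enough to treat $e$ in the compact set $[0,n+1]\times[0,1)$, with $n$ a fixed level to be fed into $d_D^{(n)}$. Cover this set by a finite grid; the previous step gives convergence in probability at each grid point, and a finite union handles them all at once. Interpolating requires an equicontinuity estimate for $X$, uniform in $d$: a modulus of continuity in time from the drift bound $\|\ellcap(P)^{-1}\beta_\nu\|_\infty\le\|b\|_\infty+\epsilon_d$ together with a maximal inequality for $M$ as in Proposition \ref{mbounds} (the predictable quadratic variation of $M$ over an interval of length $\delta$ is $O(d\,\delta)$), and a modulus of continuity in the starting point from monotonicity of $X_{(s,t]}$, the periodicity relation, and closeness to the smooth, uniformly bi-Lipschitz $\phi_{(s,t]}$. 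This yields
\[
\sup_{0\le s\le t\le n+1}\ \sup_{x\in[0,1)}\bigl(|X_{(s,t]}^+(x)-\phi_{(s,t]}(x)|\vee|X_{(s,t]}^-(x)-\phi_{(s,t]}(x)|\bigr)\longrightarrow 0
\]
in probability.

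It remains to convert this into $d_D$-convergence. Since the orbit comparison already gives closeness with no time shift, one may take $\lambda=\mathrm{id}$ in the definition of $d_D^{(n)}$, so $\gamma(\lambda)=0$ and $d_D^{(n)}(X,\phi)\le\sup_{I:\,\sup I<n+1}\|X_I^\times-\phi_I^\times\|$. The passage from $\|f^\times-g^\times\|$ to $\sup_x|f(x)-g(x)|$ is the elementary estimate of \cite{NT}: because $y\mapsto y+\phi_{(s,t]}(y)$ is increasing with slope at least $1$, for each $t$ the points selecting $\phi_I^\times(t)$ and $X_I^\times(t)$ differ by at most $\|X_I-\phi_I\|_\infty$ plus the size of a jump of $X_I$, and every jump of $X$ is at most $\|\tilde\gamma_P\|_\infty=O(d)$. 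Combining this with the uniform bound above gives $\sup_{I:\,\sup I<n+1}\|X_I^\times-\phi_I^\times\|\to 0$ in probability, hence $d_D^{(n)}(X,\phi)\to 0$ for every $n$, and summing the dominated series $d_D(X,\phi)=\sum_n 2^{-n}\{d_D^{(n)}(X,\phi)\wedge 1\}$ yields $d_D(X,\phi)\to 0$ in probability.

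The step I expect to be the main obstacle is the second one: the equicontinuity (tightness) estimates for the cadlag flow $X$ that are uniform in $d$ and strong enough to pass from finitely many orbits to the supremum over all intervals $I$ and all starting points that enters $d_D$. The orbit-by-orbit Gr\"onwall argument and the final bookkeeping with the metric are routine once this uniformity --- driven by the vanishing quadratic variation $\ellcap(P)^{-1}\rho(P)^{-1}\asymp d$ and the jump bound $O(d)$ --- is available.
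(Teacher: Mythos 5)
Your orbit-by-orbit Gr\"onwall argument is exactly the paper's: the same decomposition of $X_t-\phi_{(s,t]}(x)$ into the martingale $M_{ts}$, the error $\ellcap(P)^{-1}\beta_\nu - b$ controlled by the preceding Proposition, and the Lipschitz term $b(X_r)-b(\phi_{(s,r]}(x))$, followed by Gr\"onwall and Proposition~\ref{mbounds}, giving $\sup_{s<t\le T}|X_t-\phi_{(s,t]}(x)|\to 0$ in probability for each fixed $e$. The only point of divergence is the final promotion to $d_D$-convergence: the paper observes that the preceding holds for every $e$ in a countable dense set and then invokes the proof of Proposition~10.11 of \cite{NT} as a black box, whereas you sketch that step yourself and rightly flag it as where the remaining work sits. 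Your sketch has the right shape; one simplification is that the interpolation in the space variable does not actually need an independent equicontinuity estimate for $X$ --- since $X_{(s,t]}$ is nondecreasing and satisfies $X_{(s,t]}(x+1)=X_{(s,t]}(x)+1$, while $\phi_{(s,t]}$ is continuous and increasing, closeness at a fine grid in $x$ propagates to all $x$ by a Polya-type argument, and the same monotonicity gives $|X^\times_I(t)-\phi^\times_I(t)|\le\|X_I-\phi_I\|_\infty$ directly, without the extra jump term you allow for. The genuinely new ingredient beyond the Gr\"onwall bound is thus only the modulus of continuity in the time endpoints $s,t$ uniformly in $d$, which you correctly attribute to the bounded drift and the $O(d\,\delta)$ quadratic variation of $M$ over intervals of length $\delta$.
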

\begin{proof}
 Given $\epsilon > 0$, for fixed $e=(s,x) \in [0,\infty) \times \RR$ and $T>s$, choose $d_0>0$ sufficiently small that $\|\ellcap(P)^{-1} \beta_\nu - b \|_\infty <\epsilon e^{-\|b'\|_\infty T}/2(T-s)$ for all $d \leq d_0$, and set
$$
\Omega_{T,d} = \left\{ \sup_{s < t \leq T} |M_{ts}| \leq \epsilon e^{-\|b'\|_\infty T} / 2\right\}.
$$
Then if $d \leq d_0$, on the set $\Omega_{T,d}$,
\begin{align*}
 \sup_{s<t\leq T}|X_t &- \phi_{(s,t]}(x)| \\
        \leq& \sup_{s<t\leq T} |M_{ts}| \\
         &+ \sup_{s<t\leq T} \int_s^t |\ellcap(P)^{-1}\beta_\nu(X_r) - b(X_r)|\dx r \\
         &+ \sup_{s<t\leq T} \int_s^t |b(X_r) - b(\phi_{(s,r]}(x))| \dx r \\
\leq& \epsilon e^{-\|b'\|_\infty T} + \|  b' \|_\infty \int_s^T \sup_{s<t\leq r}|X_t - \phi_{(s,t]}(x)| \dx r.
\end{align*}
Hence, by Gr\"onwall's Lemma,
$$
\sup_{s<t\leq T}|X_t - \phi_{(s,t]}(x)| \leq \epsilon.
$$
Therefore, by Proposition \ref{mbounds},
$$
\limsup_{d \rightarrow 0} \PP (\sup_{s<t\leq T}|X_t - \phi_{(s,t]}(x)| > \epsilon) \leq \limsup_{d \rightarrow 0} \PP({\Omega_{T,d}}^c) = 0.
$$
For any countable dense set $E \subset [0,\infty) \times \RR$, $X_t \rightarrow \phi_{(s,t]}(x)$ uniformly on compacts in probability as $d \rightarrow 0$, for all $(s,x) = e \in E$. Therefore, by the proof of Proposition 10.11 in \cite{NT}, $d_D(X, \phi) \rightarrow 0$ in probability as $d \rightarrow 0$.
\end{proof}

\begin{corollary}
Let $x_1,\dots,x_n$ be a positively oriented set of points in $\RR/\ZZ$ and set $x_0=x_n$.
Set $K_t=K_{\lfloor \ellcap(P)^{-1} t\rfloor}$.
For $k=1,\dots,n$, write $\omega_t^k$ for the harmonic measure in $K_t$ of the boundary segment of all
fingers in $K_t$ attached between $x_{k-1}$ and $x_k$.
Then, in the limit $d\to0$, $(\omega_t^1,\dots,\omega_t^n)_{t\geq0}$ converges weakly in $D([0,\infty),[0,1]^n)$
to $(\phi_{(0,t]}(x_1)-\phi_{(0,t]}(x_0),\dots,\phi_{(0,t]}(x_n)-\phi_{(0,t]}(x_{n-1}))_{t\geq0}$.
\end{corollary}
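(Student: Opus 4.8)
The plan is to deduce the corollary from the preceding theorem --- the convergence of the lifted boundary flow $X$ to the deterministic flow $\phi$ --- in two steps. First I would identify the harmonic measures $\omega_t^k$ with increments of the flow $X$ along the fixed points $x_0,\dots,x_n$. Then I would transfer the convergence $X\to\phi$ to these increments and recognise the resulting convergence (uniform on compacts, in probability, to a deterministic continuous limit) as weak convergence in $D([0,\infty),[0,1]^n)$.

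For the identification, I would fix the lift of the points so that $x_0<x_1<\dots<x_n=x_0+1$ (i.e.\ take $x_0=x_n-1$). As in \cite{NT}, the circle map $\Gamma_{n0}$ --- whose lift to $\RR$ is $X_{(0,\cdot]}$, with $n$ the number of particles present at the relevant time --- pushes harmonic measure on $\partial K_n$ forward to Lebesgue measure on $\partial K_0=\TT$ and carries the boundary segment formed by all fingers attached between $x_{k-1}$ and $x_k$ onto the arc $(x_{k-1},x_k)$. Since $X_{(0,t]}$ is non-decreasing of degree one, this sandwiches $\omega_t^k$ between $X_{(0,t]}^{-}(x_k)-X_{(0,t]}^{+}(x_{k-1})$ and $X_{(0,t]}^{+}(x_k)-X_{(0,t]}^{-}(x_{k-1})$, the $n$ increments summing to $1$; the theorem's proof controls both one-sided versions and assigns them a common limit, so the choice will be immaterial. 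One point to be dealt with here is that the cluster in the corollary is indexed by the deterministic clock $\lfloor\ellcap(P)^{-1}t\rfloor$ whereas $X$ runs on a Poisson clock of rate $\ellcap(P)^{-1}$; since $\ellcap(P)^{-1}\to\infty$, a law of large numbers gives $\sup_{t\le T}|\ellcap(P)N_t-t|\to0$ in probability, $N_t$ being the number of Poisson points in $(0,t]$, and combined with the short-time increment bound from Proposition \ref{mbounds} this makes the two time-changes interchangeable in the limit, exactly as in \cite{NT}.

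To pass to the limit, I would use that the proof of the preceding theorem in fact shows, for \emph{every} fixed $e=(s,x)$ and every $T>s$, that $\sup_{s<t\le T}|X_t^{e,\pm}-\phi_{(s,t]}(x)|\to0$ in probability as $d\to0$ (this is the Gr\"onwall estimate there combined with Proposition \ref{mbounds}, and it is valid for both one-sided versions). Applying this with $e=(0,x_k)$ for $k=0,1,\dots,n$, taking a common $T$, and subtracting consecutive terms yields, for each $T$, $\sup_{0<t\le T}\max_{1\le k\le n}|\omega_t^k-(\phi_{(0,t]}(x_k)-\phi_{(0,t]}(x_{k-1}))|\to0$ in probability.

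Finally, $b$ is Lipschitz on $\RR/\ZZ$ (indeed $\|b'\|_\infty<\infty$, as already used in the preceding proof), so the solution $\phi$ of \eqref{floweq} has continuous trajectories $t\mapsto\phi_{(0,t]}(x_k)$; hence the limit is a deterministic, continuous element of $D([0,\infty),[0,1]^n)$ (note also $\omega_t^k\in[0,1]$). Uniform-on-compacts convergence in probability to a continuous limit is convergence in the Skorokhod topology in probability, and convergence in probability to a deterministic limit is weak convergence, which gives the claim. The part I expect to require the most care is the identification step --- specifically the reconciliation of the deterministic and Poisson time-changes and the one-sided bookkeeping at jumps --- but this can be carried out following the corresponding argument of \cite{NT}.
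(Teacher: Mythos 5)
Your proposal is correct, and it fills in the argument the paper leaves implicit: the corollary is stated without a proof, the intended deduction being that the identification of $\omega_t^k$ with increments of the boundary flow is exactly the one set up in \cite{NT}, after which the preceding theorem (whose proof, as you note, does give $\sup_{s<t\le T}\vert X_t^{e,\pm}-\phi_{(s,t]}(x)\vert\to 0$ in probability for each fixed $e$ and $T$) can be invoked. The two points you flag --- the reconciliation of the Poisson clock used to build $X$ with the deterministic clock $\lfloor\ellcap(P)^{-1}t\rfloor$, and the $\pm$ bookkeeping at jumps --- are indeed the only nontrivial steps, and your treatment of them (law of large numbers for $N_t$ together with the increment control from Proposition \ref{mbounds}, plus the observation that both one-sided versions share the continuous limit $\phi$) is what the reference to \cite{NT} is meant to cover. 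The final passage from uniform-on-compacts convergence in probability to a deterministic continuous limit, to weak convergence in $D([0,\infty),[0,1]^n)$, is standard and correctly stated. In short: this is the argument, spelled out.
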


A geometric consequence of this result is that the number of infinite fingers of the cluster converges to the number of stable equilibria of the ordinary differential equation $\dot{x}_t = b(x_t)$, and the positions at which these fingers are rooted to the unit disk converge to the unstable equilibria of the ODE.

\subsection{Fluctuations}
In this section, suppose that $P$ is chosen so that
$$
d^{-1/2} \left | \ellcap(P)^{-1} \int_0^1 \tilde{\gamma}_{P}(z) \dx z - c_0 \right | \rightarrow 0,
$$
as $d \rightarrow 0$.
For fixed $(s,x) \in [0, \infty) \times \mathbb{R}$, define
$$
Z^{P}_t = (\ellcap(P) \rho(P))^{1/2}(X_{(s,t]}(x) - \phi_{(s,t]}(x))
$$
and let $Z_t$ be the solution to the linear stochastic differential equation
$$
\dx Z_t = \sqrt{h_\nu(\phi_{(s,t]}(x))} \dx B_t + b' (\phi_{(s,t]}(x))Z_t \dx t, \quad t \geq s,
$$
starting from $Z_s=0$, where $B_t$ is a standard Brownian motion.

Note that if $x$ is off the support of $h_\nu$, then $Z_t = 0$ for all $t \geq s$. Also observe that in the case where $\nu$ is the uniform measure on the unit circle, $(\ellcap(P) \rho(P))^{1/2}(X_{(s,t]}(x)-x-c_0(t-s))_{t \geq s}$ converges to standard Brownian motion, starting from $0$ at time $s$.

\begin{lemma}
\label{tightflow}
For fixed $x$ and $s < T < \infty$ there exists some constant $C$, dependent only on $T$, $h_\nu$ and $b$ such that   
$$
\EE \left ( \sup_{s \leq t \leq T} |Z_t^{P}|^2 \right ) \leq C
$$
and, for all $s \leq t_1 < t_2 \leq T$,
$$
\EE \left ( \sup_{t_1 \leq t \leq t_2} |Z_t^{P} - Z_{t_1}^{P} |^2 \right ) \leq C (t_2-t_1).
$$ 
Therefore the family of processes $(Z^{P}_t)_{t \geq s}$ is tight with respect to parameter $d$. 
\end{lemma}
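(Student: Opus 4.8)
The plan is to start from the integral equation for $X_t = X_{(s,t]}(x)$, subtract the flow equation \eqref{floweq} for $\phi_{(s,t]}(x)$, and rescale by $(\ellcap(P)\rho(P))^{1/2}$. Writing $N_{ts} = (\ellcap(P)\rho(P))^{1/2} M_{ts}$ for the rescaled martingale and splitting $\ellcap(P)^{-1}\beta_\nu(X_r) - b(\phi_{(s,r]}(x)) = [\ellcap(P)^{-1}\beta_\nu(X_r) - b(X_r)] + [b(X_r) - b(\phi_{(s,r]}(x))]$, one obtains
\[
Z_t^P = N_{ts} + \int_s^t (\ellcap(P)\rho(P))^{1/2}[\ellcap(P)^{-1}\beta_\nu(X_r) - b(X_r)]\,\dx r + \int_s^t (\ellcap(P)\rho(P))^{1/2}[b(X_r) - b(\phi_{(s,r]}(x))]\,\dx r.
\]
I would control the three pieces as follows. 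Since $\ellcap(P)\asymp d^2$ and $\rho(P)\asymp d^{-3}$, we have $(\ellcap(P)\rho(P))^{1/2}\asymp d^{-1/2}$; the standing hypothesis of this subsection combined with the refined half of the Proposition preceding \ref{mbounds} gives $\|\ellcap(P)^{-1}\beta_\nu - b\|_\infty = o(d^{1/2})$, so the middle term is bounded pathwise by $(T-s)A_d$, where $A_d = (\ellcap(P)\rho(P))^{1/2}\|\ellcap(P)^{-1}\beta_\nu - b\|_\infty = o(1)$. For the martingale term, rescaling the estimate of Proposition \ref{mbounds} (and applying Doob's $L^2$ inequality on the closed interval) gives $\EE(\sup_{s\leq t\leq T}|N_{ts}|^2) \leq 4\|h_\nu\|_\infty(T-s)$, uniformly in $d$; moreover the increments $N_{ts}-N_{t_1 s}$, $t\in[t_1,t_2]$, are themselves martingales with the same predictable-bracket bound, so $\EE(\sup_{t_1\leq t\leq t_2}|N_{ts}-N_{t_1 s}|^2)\leq 4\|h_\nu\|_\infty(t_2-t_1)$. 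Finally, since $h_\nu$ is twice differentiable and periodic, $b$ and $b'$ are bounded, and the last term is dominated by $\|b'\|_\infty\int_s^t|X_r-\phi_{(s,r]}(x)|\,\dx r = \|b'\|_\infty(\ellcap(P)\rho(P))^{-1/2}\int_s^t|Z_r^P|\,\dx r$.

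Setting $Z_t^* = \sup_{s\leq u\leq t}|Z_u^P|$, the above yields $Z_t^* \leq \sup_{s\leq u\leq T}|N_{us}| + (T-s)A_d + \|b'\|_\infty\int_s^t Z_r^*\,\dx r$, and Gr\"onwall's lemma gives $\sup_{s\leq t\leq T}|Z_t^P| \leq (\sup_{s\leq u\leq T}|N_{us}| + (T-s)A_d)e^{\|b'\|_\infty(T-s)}$. Squaring, taking expectations and using the martingale bound proves the first inequality with $C = C(T,h_\nu,b)$. For the second inequality I would localise the argument to $[t_1,t_2]$: subtracting $Z_{t_1}^P$ gives $|Z_t^P - Z_{t_1}^P| \leq |N_{ts}-N_{t_1 s}| + (t_2-t_1)A_d + \|b'\|_\infty\int_{t_1}^t|Z_r^P|\,\dx r$, hence $\sup_{t_1\leq t\leq t_2}|Z_t^P-Z_{t_1}^P| \leq \sup_{t_1\leq t\leq t_2}|N_{ts}-N_{t_1 s}| + (t_2-t_1)(A_d + \|b'\|_\infty\sup_{s\leq r\leq T}|Z_r^P|)$; taking second moments, invoking the martingale increment bound and the first inequality, and absorbing the $(t_2-t_1)^2$ terms using $t_2-t_1\leq T$, gives $\leq C(t_2-t_1)$.

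For the tightness conclusion I would verify Aldous's criterion for $D([s,\infty),\RR)$: the compact-containment condition at each fixed $t$ follows from the first moment bound, and for any stopping time $\tau\leq T$ and small $\delta>0$ one has $\EE[|Z^P_{(\tau+\delta)\wedge T}-Z^P_\tau|^2\wedge 1]\leq C\delta$, where the martingale part is handled by optional sampling together with the Lipschitz continuity in $t$ of its predictable bracket, and the drift part is Lipschitz in $t$ with a $d$-uniform constant because $b$ and $A_d$ are bounded. Since these bounds vanish with $\delta$ uniformly in $d$, the family $(Z^P_t)$ is tight and any subsequential limit is continuous. The main obstacle is the bookkeeping of the two competing scales — the $d^{-1/2}$ blow-up from the rescaling against the $o(d^{1/2})$ model error on one hand, and the exact cancellation of the $\ellcap(P)\rho(P)$ factor against the $\ellcap(P)^{-1}\rho(P)^{-1}$ in the martingale $L^2$ bound on the other — so that everything remains bounded uniformly in $d$; once this matching is in place, the Gr\"onwall and Aldous steps are routine.
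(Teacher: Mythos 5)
Your proposal is correct and follows essentially the same route as the paper: you use the same three-term decomposition of $Z_t^P$ into the rescaled martingale $N_{ts}$, the model error $(\ellcap(P)\rho(P))^{1/2}(\ellcap(P)^{-1}\beta_\nu - b)$, and the Lipschitz term $b(X_r)-b(\phi_{(s,r]}(x))$, control the martingale via Proposition \ref{mbounds}, and close with Gr\"onwall. The only minor cosmetic differences are that you run Gr\"onwall pathwise on $\sup_{s\leq u\leq t}|Z_u^P|$ and square afterwards rather than directly on $\EE(\sup|Z_t^P|^2)$ as the paper does, and that you spell out Aldous's criterion for the tightness conclusion which the paper leaves implicit; both are equivalent in substance.
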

\begin{proof}
 Since $\ellcap(P) \rho(P) \asymp d^{-1}$ there exists $C'>0$ such that 
$$
(\ellcap(P) \rho(P))^{\frac{1}{2}}\left | \ellcap(P)^{-1} \beta_{\nu}(x) - b(x) \right | < C'.
$$
Then, 
\begin{align*}
 \EE \Big ( \sup_{s \leq t \leq T} &|Z_t^{P}|^2 \Big ) \\
  \leq& 3\ellcap(P) \rho(P)\EE(\sup_{s \leq t \leq T}|M_{ts}|^2) \\
  & + 3 \ellcap(P) \rho(P) \int_s^T \EE(|\ellcap(P)^{-1} \beta_{\nu}(X_r) - b(X_r)|^2) \dx r \\
  & + 3 \ellcap(P) \rho(P) \int_s^T \EE(\sup_{s \leq t \leq r}|b(X_t) - b(\phi_{(s,t]}(x))|^2) \dx r \\
  \leq& (12 \|h_\nu\|_\infty + 3C')(T-s) + 3 \|b'\|_\infty \int_s^T \EE \left ( \sup_{s \leq t \leq r} |Z_t^{P}|^2 \right ) \dx r.
\end{align*}
The result follows by Gr\"onwall's Lemma. Similarly
\begin{align*}
 \EE \Big ( \sup_{t_1 \leq t \leq t_2} &|Z_t^{P} - Z_{t_1}^{P}|^2 \Big ) \\
  \leq&  (12 \|h_\nu\|_\infty + 3C')(t_2-t_1) \\
  & + 3 \|b'\|_\infty \int_{t_1}^{t_2} \EE (|Z_{t_1}^{P}|^2) \dx r \\
  & + 3 \|b'\|_\infty \int_{t_1}^{t_2} \EE \left ( \sup_{t_1 \leq t \leq r} |Z_t^{P} - Z_{t_1}^{P}|^2  \right ) \dx r \\
  \leq& (12 \|h_\nu\|_\infty+3C' + 3\|b'\|_\infty  \EE (|Z_{t_1}^{P}|^2)) (t_2-t_1) \\
  &  + 3 \|b'\|_\infty \int_{t_1}^{t_2} \EE \left ( \sup_{t_1 \leq t \leq r} |Z_t^{P} - Z_{t_1}^{P} |^2 \right ) \dx r.
\end{align*}
Again, the result follows by Gr\"onwall's Lemma.
\end{proof}

\begin{theorem}
\label{flucts}
As $d \rightarrow 0$, the processes $Z^{P}_t \to Z_t$ in distribution.
\end{theorem}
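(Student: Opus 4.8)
The plan is to turn the definition of $Z^P$ into a linear equation driven by a rescaled martingale, establish a martingale central limit theorem for the driving term, and then pass to the limit in the linear equation. Write $a_P=(\ellcap(P)\rho(P))^{1/2}$ and recall $\ellcap(P)\asymp d^2$, $\rho(P)\asymp d^{-3}$, so that $a_P\asymp d^{-1/2}$ and $a_P^{-1}\asymp d^{1/2}$. Subtracting \eqref{floweq} from the integral equation for $X_t$ recalled above and multiplying through by $a_P$ gives
\begin{equation*}
Z^P_t=N^P_t+\int_s^t a_P\bigl(\ellcap(P)^{-1}\beta_\nu(X_r)-b(\phi_{(s,r]}(x))\bigr)\,\dx r,\qquad N^P_t:=a_P M_{ts},
\end{equation*}
where $N^P$ is a purely discontinuous (compensated Poisson) martingale, and where we have used $a_P(X_r-\phi_{(s,r]}(x))=Z^P_r$.

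\emph{Convergence of the martingale.} The predictable quadratic variation of $N^P$ is $\langle N^P\rangle_t=\rho(P)\int_s^t\int_0^1\tilde{\gamma}_P(X_r-z)^2 h_\nu(z)\,\dx z\,\dx r$. Substituting $w=X_r-z$ and using the normalisation $\rho(P)\int_0^1\tilde{\gamma}_P(w)^2\,\dx w=1$, the bound $\|\tilde{\gamma}_P\|_\infty\lesssim d$, and the estimates on $\tilde{\gamma}_P$ from \cite{Lawler} (which also give enough decay away from the attachment point that $\rho(P)\int_0^1\tilde{\gamma}_P(w)^2|w|\,\dx w\to0$), together with $h_\nu\in C^2$, one obtains the deterministic bound
\begin{equation*}
\sup_y\Bigl|\rho(P)\int_0^1\tilde{\gamma}_P(y-z)^2 h_\nu(z)\,\dx z-h_\nu(y)\Bigr|\to0\qquad(d\to0).
\end{equation*}
Since Lemma \ref{tightflow} forces $\sup_{s\le r\le T}|X_r-\phi_{(s,r]}(x)|=a_P^{-1}\sup_r|Z^P_r|\to0$ in probability, it follows that $\langle N^P\rangle_t\to\int_s^t h_\nu(\phi_{(s,r]}(x))\,\dx r$ in probability for every $t$. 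The jumps of $N^P$ are bounded by $a_P\|\tilde{\gamma}_P\|_\infty\asymp d^{1/2}\to0$, so the Lindeberg condition is trivially met, and a standard martingale functional central limit theorem (see, e.g., \cite{Kallenberg}) gives $N^P\to N$ in distribution in the Skorokhod space $D([s,\infty),\RR)$, where $N$ is the continuous Gaussian martingale with $\langle N\rangle_t=\int_s^t h_\nu(\phi_{(s,r]}(x))\,\dx r$, that is $N_t=\int_s^t\sqrt{h_\nu(\phi_{(s,r]}(x))}\,\dx B_r$; as $N$ is continuous, this convergence is also uniform on $[s,T]$.

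\emph{The drift and conclusion.} Decompose the drift integrand as
\begin{equation*}
a_P\bigl(\ellcap(P)^{-1}\beta_\nu(X_r)-b(X_r)\bigr)+b'(\phi_{(s,r]}(x))Z^P_r+a_P\bigl(b(X_r)-b(\phi_{(s,r]}(x))-b'(\phi_{(s,r]}(x))(X_r-\phi_{(s,r]}(x))\bigr),
\end{equation*}
and let $\mathcal{E}^P_t$ be the integral over $[s,t]$ of the first and third terms. Under the standing hypothesis of this subsection the refined Proposition bounding $\beta_\nu$ gives $d^{-1/2}\|\ellcap(P)^{-1}\beta_\nu-b\|_\infty\to0$, so the first term contributes at most $a_P\|\ellcap(P)^{-1}\beta_\nu-b\|_\infty\to0$; the third term integrates to zero in probability by a routine $\varepsilon$-$\delta$ argument using the uniform continuity of $b'$ (which holds since $h_\nu\in C^2$) and the tightness of $Z^P$ from Lemma \ref{tightflow} (which keeps $|X_r-\phi_{(s,r]}(x)|=a_P^{-1}|Z^P_r|$ small). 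Hence $\sup_{s\le t\le T}|\mathcal{E}^P_t|\to0$ in probability and $Z^P_t=A^P_t+\int_s^t b'(\phi_{(s,r]}(x))Z^P_r\,\dx r$ with $A^P:=N^P+\mathcal{E}^P$. This linear equation is solved explicitly by $Z^P_t=A^P_t+\int_s^t b'(\phi_{(s,u]}(x))\exp\bigl(\int_u^t b'(\phi_{(s,r]}(x))\,\dx r\bigr)A^P_u\,\dx u$, and the right-hand side is a continuous functional of the path $A^P$. Since $N^P\to N$ in distribution and $\mathcal{E}^P\to0$ uniformly in probability, $A^P\to N$ in distribution, and the continuous mapping theorem gives $Z^P\to Z$ in distribution, where $Z$ is the same functional of $N$; a direct check identifies this $Z$ as the unique solution of $\dx Z_t=\sqrt{h_\nu(\phi_{(s,t]}(x))}\,\dx B_t+b'(\phi_{(s,t]}(x))Z_t\,\dx t$, $Z_s=0$.

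The step I expect to be the main obstacle is the identification of the limiting quadratic variation, i.e.\ the uniform convergence $\rho(P)\int_0^1\tilde{\gamma}_P(y-z)^2 h_\nu(z)\,\dx z\to h_\nu(y)$: this is where the fine behaviour of the conformal map $\gamma_P$, both near the attachment point $1$ and in its tails, must be combined with the precise normalisation defining $\rho(P)$. Given Lemma \ref{tightflow} and the Proposition bounding $\beta_\nu$, the remaining ingredients — the martingale CLT, the drift estimate, and the continuous-mapping argument — are comparatively routine.
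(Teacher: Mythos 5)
Your proof is correct and follows the same overall strategy as the paper (linearize around $\phi$, separate the martingale from the drift, identify the limiting variance), but the two executions diverge in a meaningful way. The paper reduces to one-dimensional marginals (invoking tightness from Lemma~\ref{tightflow} together with the essentially-independent-increments structure of $\psi_t Z^P_t$) and then computes the characteristic function of the compensated Poisson integral $(\ellcap(P)\rho(P))^{1/2}\int_0^t\psi_s\,\dx M_s$ explicitly, Taylor-expanding the L\'evy exponent $\zeta(\theta,x)$ and controlling the error terms with the same $\tilde\gamma_P$ estimates you cite. You instead prove process-level convergence directly: you compute $\langle N^P\rangle_t$, verify the vanishing-jumps (Lindeberg/UAN) condition from $a_P\|\tilde\gamma_P\|_\infty\asymp d^{1/2}\to 0$, and invoke a martingale functional CLT to get $N^P\Rightarrow N$ in $D([s,\infty),\RR)$, then pass through the linear solution operator (variation of constants) by continuous mapping. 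The key analytic estimate is identical in both proofs --- the uniform convergence $\rho(P)\int_0^1\tilde\gamma_P(y-z)^2 h_\nu(z)\,\dx z\to h_\nu(y)$ is exactly what the paper establishes in the display beginning ``$|\rho(P)\int_0^1\tilde\gamma_P(x-z)^2 h_\nu(z)\dx z-h_\nu(x)|\le\ldots$'' --- and you correctly flag it as the real work. Your route is arguably cleaner in that the abstract FCLT sidesteps the paper's slightly loose appeal to independent increments of $Z^P_t$ (the increments of $Z^P_t$ itself are not independent because of the state-dependent drift; it is the transformed process $\psi_t Z^P_t$ that is a martingale plus small remainder), at the cost of needing a citation to a general martingale CLT rather than an elementary characteristic-function calculation. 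One small point worth making explicit: your treatment of the second-order Taylor remainder relies on uniform continuity of $b'$ rather than on $\|b''\|_\infty$, which is the right move here since $b'$ involves the Hilbert transform of $h_\nu'$ and need not be differentiable even for $h_\nu\in C^2$; the $\varepsilon$-$\delta$ argument you sketch, combined with $a_P^{-1}\sup_r|Z^P_r|\to 0$ in probability from Lemma~\ref{tightflow}, does close this gap.
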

\begin{proof}
 Since the processes $(Z^{P}_t)_{t \geq s}$ are tight, and since both $Z^{P}_t$ and $Z_t$ have independent increments (see, for example, \cite{Kallenberg}), it is sufficient to show that, for fixed $t \geq s$, $Z^{P}_t \to Z_t$ in distribution.

 For simplicity, let $s=0$, and $x_t = \phi_{(0,t]}(x)$.

 Define $\psi_t$ to be the solution to the linear ordinary differential equation
$$
\dot{\psi_t} = -b'(x_t) \psi_t, \quad \psi_0 = 1.
$$
By It\^o's formula,
$$
\psi_t Z_t = \int_0^t \psi_s \sqrt{h_\nu(x_s)} \dx B_s \sim N \left(0, \int_0^t \psi_s^2 h_\nu(x_s) \dx s\right).
$$
Hence $Z_t$ is a Gaussian process. Similarly
$$
\psi_t Z^{P}_t = (\ellcap(P) \rho(P))^{1/2} \int_0^t \psi_s \dx M_s + \int_0^t R_s^P \dx s,
$$
where 
$$
R_t^P = (\ellcap(P) \rho(P))^{1/2}\psi_t( \ellcap(P)^{-1} \beta_\nu(X_t) - b(x_t) - b'(x_t)(X_t - x_t)).
$$
Using the bounds on $Z_t^{P}$ established above, it is straightforward to show that
$$
\int_0^t R_s^P \dx s \rightarrow 0
$$
in probability. 
Therefore it suffices to show that 
$$
(\ellcap(P) \rho(P))^{1/2} \int_0^t \psi_s \dx M_s \rightarrow N \left(0, \int_0^t \psi_s^2 h_\nu(x_s) \dx s\right)
$$
in distribution.

The characteristic function
\begin{eqnarray*}
 \chi(\eta) 
   &=& \EE \left ( \exp \left ( i \eta (\ellcap(P) \rho(P))^{1/2} \int_0^t \psi_s \dx M_s \right )\right ) \\
   &=& \EE \left ( \exp \int_0^t \zeta(\eta (\ellcap(P) \rho(P))^{1/2} \psi_s, X_s) \dx s \right ),
\end{eqnarray*} 
where
\begin{align*}
 \zeta(\theta, x)  &= \int_0^1 \left ( e^{i \theta \tilde{\gamma}_P(x - z)} - 1 - i \theta  \tilde{\gamma}_P(x-z)\right ) h_\nu(z) \ellcap(P)^{-1} \dx z \\
   &= - \frac{\theta^2}{\ellcap(P)} \int_0^1 \int_0^1 \tilde{\gamma}_P(x-z)^2(1-r)e^{i r \theta \tilde{\gamma}_P(x - z)} h_\nu(z) \dx r \dx z \\
   &= - \frac{\theta^2}{2\ellcap(P)\rho(P)} \rho(P) \int_0^1 \tilde{\gamma}_P(x-z)^2 h_\nu(z) \dx z \\
   & - \frac{\theta^2}{\ellcap(P)} \int_0^1 (1-r) \int_0^1 \tilde{\gamma}_P(x-z)^2(e^{i r \theta \tilde{\gamma}_P(x - z)}-1) h_\nu(z) \dx z \dx r.   
\end{align*}
Here we have used the fact that by It\^o's formula (see, for example, \cite{Kallenberg}), the process
$$
\left ( \exp \left ( i \theta \int_0^t \psi_s \dx M_s  - \int_0^t \zeta(\theta \psi_s, X_s) \dx s \right )\right )_{t \geq s}
$$
is a martingale.
Now
\begin{eqnarray*}
 && \left | \rho(P)\int_0^1 \tilde{\gamma}_P(x-z)^2 h_\nu(z) \dx z - h_\nu(x) \right | \\
 &\leq& \rho(P)\int_0^1 \tilde{\gamma}_P(x-z)^2|h_\nu(z) - h_\nu(x)| \dx z  \\
    &\leq& \|h_\nu'\|_\infty \rho(P) \int_0^1 \tilde{\gamma}_P(x-z)^2 |x-z| \dx z \\
    &=& \|h_\nu'\|_\infty \rho(P) \left ( \int_{-cd}^{cd} \tilde{\gamma}_P(z)^2 |z| \dx z + \int_{cd}^{1-cd}  \tilde{\gamma}_P(z)^2 z \dx z \right )\\
    &\leq& \|h_\nu'\|_\infty \rho(P) \left ( 2 (cd)^4 + \frac{9 \ellcap(P)^2}{8 \pi^4} \log \sin (\pi c d)\right ) \\
    &\rightarrow& 0,
\end{eqnarray*}
as $d \rightarrow 0$,
and
\begin{eqnarray*}
 &&\left | \int_0^1 \tilde{\gamma}_P(x-z)^2(e^{i r \theta \tilde{\gamma}_P(x - z)}-1) h_\nu(z) \dx z \right |\\
 &\leq& \|h_\nu\|_\infty |\theta| \|\tilde{\gamma}_P\|_\infty \rho(P)^{-1} \\
 &\leq& \|h_\nu\|_\infty |\theta| c'd \rho(P)^{-1}.
\end{eqnarray*}
Hence
$$
\zeta(\theta (\ellcap(P) \rho(P))^{1/2} \psi_s, X_s) \rightarrow -\theta^2 \psi_s h_\nu(x_s)
$$
in probability as $d \rightarrow 0$. Therefore
$$
 \chi(\eta) \rightarrow \exp \left ( - \eta^2 \int_0^t \psi_s^2 h_\nu(x_s) \dx s \right )
$$
and so 
$$
(\ellcap(P) \rho(P))^{1/2} \int_0^t \psi_s \dx M_s \rightarrow N \left(0, \int_0^t \psi_s^2 h_\nu(x_s) \dx s\right)
$$
in distribution, as required.
\end{proof}

\subsection{The uniform case}
In the case of non-uniform $\nu$, the behaviour of the boundary flow $(X_t)_{t \geq s}$ is dominated by non-trivial deterministic drift behaviour, and the random fluctuations only contribute as lower order perturbations. In the case when $\nu$ is the uniform measure on $[0,1)$, however, the drift vanishes and the random fluctations describe the highest order behaviour. This case is explored in detail in \cite{NT}, where it is shown that under suitable scaling, the boundary flow converges to the coalescing Brownian flow described in Section \ref{BrownianWeb}.

The key result shows that the joint distribution of flows starting from a finite collection of points in spacetime, converges to that of coalescing Brownian motions. In this subsection we give an adaptation of this proof to ${\rm HL}(0)$ clusters constructed with random diameters. We give this partly for completeness and to highlight the difference in behaviour between the uniform case and anisotropic case, but also to illustrate how all of the proofs in this section can be easily adapted to hold in the case of random diameters.

For law $\sigma$ with finite third moment $\sigma_3$, define $\rho(\sigma)$ by
$$
\rho(\sigma) \int_0^\infty \int_0^1 \tilde{\gamma}_{P(d)}(x)^2 \dx x \dx \sigma (d) = 1.
$$
Note that $\rho(\sigma)$ is well defined and $\rho(\sigma) \asymp \sigma_3^{-1}$.

Recall the construction of the flow $(\Gamma_I: I \subseteq [0,\infty))$ from Subsection \ref{BrownianWeb} (with the drift compensated for), but constructed from particles with random diameters with law $\sigma$, and with rate $\rho(P)$ replaced by $\rho(\sigma)$. Let $X \in D^\circ$ be a lifting of $\Gamma$ onto the real line. Then for fixed $e=(s,x) \in [0,\infty) \times \RR$,  $X_t = X_{(s,t]}(x)$ satisfies the integral equation
$$
X_t =  x + \int_{(s,t] \times (0, \infty) \times [0,1)} \tilde{\gamma}_{P(d)}(X_{r-}-z)\mu(\dx r, \dx d, \dx z), \quad t \geq s,
$$
where $\mu$ is a Poisson random measure of intensity $\rho(\sigma)h_\nu(z)\dx z \dx \sigma(d) \dx r$.

Write $\mu^{\sigma}_e$ for the distribution of $(X_t)_{t \geq s}$ on the Skorokhod space $D_e=D_x([s,\infty),\RR)$ of cadlag paths starting from $x$ at time $s$.
Write $\mu_e$ for the distribution on $D_e$ of a standard Brownian motion
starting from $e$.

By a straightforward adaptation of Theorem \ref{flucts}, $\mu^{\sigma}_e\to\mu_e$ weakly on $D_e$ as $\sigma_3 \rightarrow 0$.

Recall the definitions of $E$, $D_E$, $\mu_E$ from Subsection \ref{BrownianWeb}.

\begin{Pro}
We have $\mu^\sigma_E\to\mu_E$ weakly on $D_E$ as $\sigma_3 \rightarrow 0$.
\end{Pro}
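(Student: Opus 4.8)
The plan is to adapt the argument of Norris and Turner \cite{NT} establishing convergence of the finite-dimensional distributions of the boundary flow to those of coalescing Brownian motions, incorporating the averaging over the random diameters $d\sim\sigma$ exactly as in the proof of Theorem \ref{flucts}. First I would reduce to a finite sequence: since $D_E=\prod_{k\ge1}D_{e_k}$ carries a metric in which the tail coordinates contribute at most $2^{-N}$, every bounded Lipschitz function on $D_E$ is a uniform limit of functions of finitely many coordinates, so it suffices to prove that the joint law $\mu^\sigma_F$ of the flows $(X^{e_1},\dots,X^{e_K})$ converges weakly on $D_F=\prod_{k=1}^K D_{e_k}$ to the law $\mu_F$ of the corresponding $K$ coalescing Brownian motions, for every finite $F=(e_1,\dots,e_K)\subset E$. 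Tightness of $\{\mu^\sigma_F\}_\sigma$ on $D_F$ then follows from tightness of each marginal family $\{\mu^\sigma_{e_k}\}_\sigma$, which is supplied by the second-moment and increment estimates of the type in Lemma \ref{tightflow} that underlie the single-point convergence $\mu^\sigma_{e_k}\to\mu_{e_k}$. It remains to identify every weak subsequential limit $\lambda$ as $\mu_F$.

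Let $(W^1,\dots,W^K)$ have law $\lambda$. I would verify the three properties that characterise the finite-dimensional law of the coalescing flow. First, each $W^k$ is a standard Brownian motion from $x_k$ — this is precisely the single-point convergence $\mu^\sigma_{e_k}\to\mu_{e_k}$ established by adapting Theorem \ref{flucts}. Second, the coordinates stay in cyclic order and, once two of them agree, they remain equal thereafter: in the discrete model the maps $\Gamma_I$ are monotone circle maps, so $X_{(s,t]}(x_i)$ and $X_{(s,t]}(x_j)$ preserve their cyclic order, and once two marked points are absorbed by a common particle all subsequent compositions keep them equal; both properties pass to the limit. Third, before two coordinates meet they evolve independently: the predictable quadratic covariation rate of the martingale parts of $X_\cdot(x_i)$ and $X_\cdot(x_j)$ is, for uniform $\nu$, a constant times $\rho(\sigma)\int\!\!\int\tilde{\gamma}_{P(d)}(X_{r-}(x_i)-z)\,\tilde{\gamma}_{P(d)}(X_{r-}(x_j)-z)\,\dx z\,\dx\sigma(d)$, which, using $\|\tilde{\gamma}_{P(d)}\|_\infty\le c'd$, the approximation of $\tilde{\gamma}_P$ by $\tfrac{\ellcap(P)}{2\pi}\cot(\pi\,\cdot\,)$ away from the diagonal (\cite[Section 3.5]{Lawler}), and $\rho(\sigma)\asymp\sigma_3^{-1}$ with $\ellcap(P(d))\asymp d^2$, tends to $0$ on $\{|X_{r-}(x_i)-X_{r-}(x_j)|>\delta\}$ for each $\delta>0$. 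Together with the vanishing of the drift terms (as in Theorem \ref{flucts}, the construction of $\Gamma$ having compensated for the deterministic translation), Lévy's characterisation then forces $(W^i,W^j)$ to be a pair of independent Brownian motions up to their first meeting time.

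The main obstacle is the remaining point: ruling out that the limiting pair reflects rather than coalesces at a meeting time, equivalently showing that the discrete coalescence times converge to the Brownian meeting times. Following \cite{NT} I would analyse the nonnegative difference $\Delta^\sigma_t=X_{(s,t]}(x_j)-X_{(s,t]}(x_i)$: while $\Delta^\sigma$ exceeds a fixed multiple of the particle scale, its martingale part has quadratic variation rate comparable to a positive constant uniformly in $\sigma$ and its drift is negligible, so once $\Delta^\sigma$ enters a $\delta$-neighbourhood of $0$ it actually hits $0$ — where the two flows coalesce exactly and stay coalesced — within time $\mathcal{O}(\delta^2)$ with probability tending to $1$ as $\sigma_3\to0$; letting $\delta\downarrow0$ afterwards shows that in the limit $W^i$ and $W^j$ coalesce at their first meeting. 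Properties one through three together with this coalescence statement determine $\lambda=\mu_F$ uniquely, and, tightness having been established, this yields $\mu^\sigma_F\to\mu_F$ for every finite $F$ and hence $\mu^\sigma_E\to\mu_E$ weakly on $D_E$.
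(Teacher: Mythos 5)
Your overall scaffolding --- tightness via the marginal moment bounds, identification of every weak subsequential limit, single-point convergence from (an adaptation of) Theorem \ref{flucts}, and the cross-correlation estimate driving $\lambda(\sigma)\to 0$ --- matches the paper's. The decisive difference, and where I see a genuine gap, is the coalescence step, which you yourself flag as ``the main obstacle.'' The paper does \emph{not} prove a quantitative hitting/coalescence-time estimate. Instead it observes: (i) the limit processes $Z^{e_j}$ are continuous local martingales with $(Z^{e_j})^2-t$ also a local martingale; (ii) using $\lambda(\sigma)\to 0$, the product $Z^{e_j}_tZ^{e_k}_t$ is a local martingale up to the meeting time $T^{jk}$; (iii) the limit law inherits from the monotone discrete circle maps the almost sure property that $Z^{e_j}_t-Z^{e_k}_t+n$ never changes sign for each $n\in\ZZ$; and therefore (iv) $Z^{e_j}-Z^{e_k}$ is, modulo integer shifts, a nonnegative continuous local martingale that hits $0$ at $T^{jk}$, and so by optional stopping it is identically $0$ thereafter, i.e.\ $(Z^{e_j}_tZ^{e_k}_t-(t-T^{jk})^+)$ is a continuous local martingale and $\mu=\mu_E$. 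No quantitative control on discrete coalescence times is required at all.

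Your proposed substitute --- ``once $\Delta^\sigma$ enters a $\delta$-neighbourhood of $0$ it hits $0$ within time $\mathcal{O}(\delta^2)$ with probability tending to $1$'' --- is not correct as stated, and patching it is harder than the paper's route. A diffusive process with quadratic variation rate bounded below, started at distance $\delta$ from $0$, reaches $0$ within time $\mathcal{O}(\delta^2)$ with probability merely bounded away from $0$ and $1$; to push that probability to $1$ you need a window $\gg\delta^2$, and on that longer window the process may also exit the $\delta$-neighbourhood, so you need an additional excursion argument. Moreover your claim that the diffusivity of $\Delta^\sigma$ is bounded below uniformly holds only while $|\Delta^\sigma|$ stays above the particle scale $\lambda(\sigma)$; below that scale the cross-correlation is no longer small and the quadratic variation rate degenerates, which is precisely the regime where you must instead argue that the discrete flows actually merge (both points absorbed by a common particle) rather than continuing to diffuse. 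None of this is addressed in your sketch, whereas the paper's sign-preservation plus optional-stopping argument sidesteps it completely. I'd recommend replacing your quantitative coalescence paragraph with the paper's martingale argument: you already observed the monotonicity of $\Gamma_I$, so you have the key hypothesis in hand; you just need to combine it with the local-martingale structure and optional stopping rather than estimating hitting times.
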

\begin{proof}
The families of marginal laws $(\mu^\sigma_{e_k})$ can be shown to be tight, with respect to parameter $\sigma$ with $\sigma_3<\infty$,  by a similar argument to that in Lemma \ref{tightflow}.
Hence the family of laws $(\mu^\sigma_E)$ is also tight.
Let $\mu$ be any weak
limit law for the limit $\sigma_3 \rightarrow 0$ and let $(Z_t^{e_k})_{t \geq s_k}$, $k \in \NN$, be a sequence of limit processes. For $j,k$
distinct, the process
$$
X^{e_j}_tX^{e_k}_t-\int_{s_j\vee s_k}^tb(X^{e_j}_s,X^{e_k}_s)ds, \quad t\geq s_j\vee s_k,
$$
is a martingale, where
$$
b(x,x')=\rho(\sigma)\int_0^\infty\int_0^1\tilde {\gamma}_{P(d)}(x-z)\tilde {\gamma}_{P(d)}(x'-z)\dx z \dx \sigma(d).
$$
Let $\lambda$ be the smallest constant
$\lambda=\lambda(\sigma)\in(0,1]$ such that
$$
\rho(\sigma)\int_0^\infty\int_0^1|\tilde {\gamma}_{P(d)}(x+a)\tilde {\gamma}_{P(d)}(x)| \dx x \dx \sigma(d)\leq\lambda,\quad a\in[\lambda,1-\lambda].
$$
It is shown in \cite{NT} that, for $d$ sufficiently small, if $a \in [d^{1/4}, 1-d^{1/4}]$ then
$$
\rho(\sigma)\int_0^1|\tilde{\gamma}_{P(d)}(x+a)\tilde{\gamma}_{P(d)}(x)|\dx x
\leq d^{1/4}.
$$
For all other values of $a$, by Cauchy-Schwarz, 
$$
\rho(\sigma)\int_0^1|\tilde{\gamma}_{P(d)}(x+a)\tilde{\gamma}_{P(d)}(x)|\dx x
\leq 1.
$$
Hence if $a \in [\sigma_3^{1/24}, 1-\sigma_3^{1/24}]$, then for $\sigma_3$ sufficiently small,
\begin{align*}
\rho(\sigma)\int_0^\infty\int_0^1|\tilde {\gamma}_{P(d)}(x+a)\tilde {\gamma}_{P(d)}(x)|\dx x \dx \sigma(d)\leq& \int_0^{\sigma_3^{1/6}} d^{1/4}\dx \sigma(d)\\
&+ \PP(d > \sigma_3^{1/6}) \\
\leq& \EE(d^{1/4}) + \sigma_3/\sigma_3^{1/2} \\
\leq& \sigma_3^{1/24}.
\end{align*}
Hence $\lambda \leq C\sigma_3^{1/24} \rightarrow 0$.

We have $|b(x,x')|\leq\lambda$ whenever $\lambda\leq|x-x'|\leq 1-\lambda$. Hence, by standard
arguments, under $\mu$, the process $(Z^{e_j}_tZ^{e_k}_t:s_j\vee s_k\leq t<T^{jk})$ is a local
martingale, where $T^{jk} = \inf \{ t \geq s_j\vee s_k: |Z_t^{e_j}-Z_t^{e_k}| \notin [\lambda, 1-\lambda] \} $. We know from the proof of the proposition above that, under $\mu$, the processes
$(Z^{e_j}_t:t\geq s_j)$, $((Z^{e_j}_t)^2-t:t\geq s_j)$ and $(Z^{e_k}_t:t\geq s_k)$ are continuous local martingales.
But $\mu$ inherits from the laws $\mu^\sigma_E$ the property
that, almost surely, for all $n \in \ZZ$, the process $(Z^{e_j}_t-Z^{e_k}_t+n:t\geq s_j\vee s_k)$ does not change sign.
Hence, by an optional stopping argument, $Z^{e_j}_t-Z^{e_k}_t$ is constant for $t\geq T^{jk}$.
It follows that $(Z^{e_j}_tZ^{e_k}_t-(t-T^{jk})^+)_{t\geq s_j\vee s_k}$ is a continuous local martingale.
Hence $\mu=\mu_E$.
\end{proof}

As observed above, $X$ is a $D^\circ$-valued random variable. Let $\mu_A^\sigma$ denote the law of $X$ on the Borel $\sigma$-algebra of $D^\circ$. The following result follows immediately from the corresponding theorem in \cite{NT}.

\begin{theorem}
We have $\mu_A^\sigma\to\mu_A$ weakly on $D^\circ$ as $\sigma_3\to0$.
\end{theorem}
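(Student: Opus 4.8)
The plan is to obtain this as an essentially immediate consequence of the general weak-convergence criterion of \cite{NT}, fed with the preceding Proposition. Recall that the law $\mu_A$ of the coalescing Brownian flow on the circle is the unique Borel probability measure on $(D^\circ, d_D)$ whose finite-dimensional projections satisfy $\mu_A \circ (Z^{F,+})^{-1} = \mu_F$ for every finite $F \subset [0,\infty) \times \RR$, where $\mu_F$ is the law of coalescing Brownian motions started from $F$. The theorem of \cite{NT} that we need states that a family of $D^\circ$-valued random variables converges weakly to $\mu_A$ provided (i) the family of induced laws on $D^\circ$ is tight, and (ii) for every finite $F$, the push-forward under $Z^{F,+}$ converges weakly to $\mu_F$ as the small parameter tends to zero.

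For (ii), I would fix a finite set $F=\{e_1,\dots,e_m\}$, enlarge it to a countable sequence $E=(e_k)_{k\in\NN}$ having $e_1,\dots,e_m$ among its entries, and observe that by the flow property of $\Gamma$ the law of $Z^{E,+}(X)$ on $D_E$ is precisely $\mu^\sigma_E$, whose image under the continuous coordinate projection $D_E\to D_F$ equals $\mu_A^\sigma\circ (Z^{F,+})^{-1}$. The preceding Proposition gives $\mu^\sigma_E\to\mu_E$ weakly on $D_E$ as $\sigma_3\to0$; since $\mu_E$ projects onto $\mu_F$, continuity of the projection yields $\mu_A^\sigma\circ (Z^{F,+})^{-1}\to\mu_F$ weakly on $D_F$, which is (ii). For (i), I would appeal to the tightness part of the argument in \cite{NT}: the random-diameter construction of Subsection \ref{BrownianWeb} still produces a genuine $D^\circ$-valued random variable, because the cadlag weak-flow inequalities and the identity-at-the-diagonal limits depend only on the flow property of the maps $\Gamma_{nm}$ and not on how the diameters are chosen, and the tightness of $(\mu_A^\sigma)$ on $D^\circ$ is deduced in \cite{NT} purely from the abstract structure of $D^\circ$ together with one-point marginal control; the latter is available here from the adaptation of Theorem \ref{flucts} giving $\mu^\sigma_e\to\mu_e$ and from moment bounds of the type established in Lemma \ref{tightflow}.

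Combining (i) and (ii) with the criterion of \cite{NT} then gives $\mu_A^\sigma\to\mu_A$ weakly on $D^\circ$. The only point requiring any care is checking that the tightness machinery of \cite{NT}, originally written for the constant-diameter ${\rm HL}(0)$ flow, transfers without change to the random-diameter flow; but since that machinery is driven entirely by one-point marginal estimates and by compactness considerations internal to $D^\circ$, and both ingredients are present in the current setting, no new argument is needed and the assertion follows at once.
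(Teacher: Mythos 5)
Your proposal is correct and takes essentially the same route as the paper, which simply states that the theorem ``follows immediately from the corresponding theorem in \cite{NT}.'' You have filled in what that appeal amounts to: the \cite{NT} convergence criterion on $D^\circ$ is tightness of the laws plus convergence of the finite-dimensional marginals under $Z^{F,+}$; the latter is exactly the preceding Proposition ($\mu^\sigma_E\to\mu_E$), and the former transfers because the \cite{NT} tightness argument on $D^\circ$ relies only on the flow structure and one-point estimates, both of which are available here from the adaptation of Theorem \ref{flucts} and the moment bounds of Lemma \ref{tightflow}. This matches the intent of the paper's one-line proof.
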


\begin{corollary}
Let $x_1,\dots,x_n$ be a positively oriented set of points in $\RR/\ZZ$ and set $x_0=x_n$.
Set $K_t=K_{\lfloor\rho(\sigma)t\rfloor}$.
For $k=1,\dots,n$, write $\omega_t^k$ for the harmonic measure in $K_t$ of the boundary segment of all
fingers in $K_t$ attached between $x_{k-1}$ and $x_k$.
Let $(B_t^1,\dots,B_t^n)_{t\ge0}$ be a family
of coalescing Brownian motions in $\RR/\ZZ$ starting from $(x_1,\dots,x_n)$.
Then, in the limit $\sigma_3\to0$, $(\omega_t^1,\dots,\omega_t^n)_{t\geq0}$ converges weakly in $D([0,\infty),[0,1]^n)$
to $(B_t^1-B_t^0,\dots,B^n_t-B_t^{n-1})_{t\geq0}$.
\end{corollary}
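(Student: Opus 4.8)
The plan is to deduce this corollary from the weak convergence $\mu_A^\sigma\to\mu_A$ on $(D^\circ,d_D)$ established in the preceding theorem, by a continuous-mapping argument — exactly as the analogous corollary in \cite{NT} is obtained from their flow-convergence result. The key observation is that the vector of harmonic measures is a continuous functional of the flow, evaluated at the fixed starting configuration $(x_1,\dots,x_n)$.

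First I would record the identification of $(\omega_t^1,\dots,\omega_t^n)$ with the flow. By the harmonic-measure interpretation of the maps $\Gamma_n$ recalled in Subsection \ref{BrownianWeb}, and with $K_t=K_{\lfloor\rho(\sigma)t\rfloor}$, one has
$$
\omega_t^k=X_{(0,t]}(x_k)-X_{(0,t]}(x_{k-1})\pmod 1,\qquad k=1,\dots,n,
$$
where $X\in D^\circ$ is the lift of $\Gamma$ used in this subsection and $x_0:=x_n$. Thus, setting $F=\{(0,x_1),\dots,(0,x_n)\}\subset[0,\infty)\times\RR$, we may write $(\omega_t^1,\dots,\omega_t^n)_{t\geq0}=\Psi\bigl(Z^{F,+}(X)\bigr)$, where $Z^{F,+}:D^\circ\to D_F$ is the measurable map from Subsection \ref{BrownianWeb} and $\Psi:D_F\to D([0,\infty),[0,1]^n)$ sends $(y^1,\dots,y^n)$ to $(y^1-y^0,\dots,y^n-y^{n-1})$ reduced mod $1$, with $y^0:=y^n$. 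Since the $x_i$ are positively oriented and the coordinate trajectories under $\mu_A$ are order-preserving and coalescing, $\Psi$ takes values in $D([0,\infty),[0,1]^n)$ and is continuous for the Skorokhod topology. By the defining property of $\mu_A$, $\mu_A\circ(Z^{F,+})^{-1}=\mu_F$, the law of the coalescing Brownian motions $(B^1,\dots,B^n)$ started from $(x_1,\dots,x_n)$, so that $\mu_F\circ\Psi^{-1}$ is the law of $(B^1-B^0,\dots,B^n-B^{n-1})$. Since the law of $(\omega_t^1,\dots,\omega_t^n)$ for the discrete random-diameter clusters is $\mu_A^\sigma\circ(\Psi\circ Z^{F,+})^{-1}$, the corollary reduces to showing that $\Psi\circ Z^{F,+}$ is continuous at $\mu_A$-almost every point of $D^\circ$, after which the continuous mapping theorem applied to $\mu_A^\sigma\to\mu_A$ yields the claim.

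The main obstacle, and the only substantive point, is precisely this $\mu_A$-a.s.\ continuity of $Z^{F,+}$: convergence of flows in $d_D$ does not in general force convergence of the trajectories issued from fixed points, because a limiting flow may branch at such a point. However, this is exactly the situation resolved by the construction of $D^\circ$ and $\mu_A$ in \cite{NT}: a coalescing Brownian flow almost surely does not initiate a branch from a prescribed deterministic point, so $Z^{F,+}$ — and hence $\Psi\circ Z^{F,+}$, since $\Psi$ is everywhere continuous — is $\mu_A$-a.s.\ continuous. Granting this, which is precisely what makes the corresponding theorem of \cite{NT} applicable, the remaining steps are routine and the convergence holds in $D([0,\infty),[0,1]^n)$ as stated.
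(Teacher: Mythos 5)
Your argument is correct and fills in precisely what the paper leaves implicit: the paper gives no explicit proof of this corollary, presenting it (like the preceding theorem) as an immediate consequence of $\mu_A^\sigma\to\mu_A$ via the machinery of \cite{NT}. Your reconstruction — rewriting $(\omega_t^k)$ as $\Psi\circ Z^{F,+}$ applied to the flow, invoking $\mu_A\circ(Z^{F,+})^{-1}=\mu_F$, and isolating the $\mu_A$-a.s.\ continuity of $Z^{F,+}$ as the one non-routine point (settled in \cite{NT} by the fact that a coalescing Brownian flow a.s.\ does not branch at a prescribed deterministic space-time point) — is exactly the intended argument.
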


\begin{acknowledgement}
The authors thank Michel Zinsmeister and the CODY network for organizing the workshop at Universit\'e d'Orl\'eans where this work was initiated. 
We thank Michael Bj\"orklund for interesting discussions, and for providing
us with the preprint \cite{MB}. We are grateful to Bj\"orn Winckler for 
assistance in generating pictures. Johansson and Sola thank the Department of Mathematics and Statistics at Lancaster University for its hospitality during our visit. 
\end{acknowledgement}


\begin{thebibliography}{1}  

\bibitem{A79} R.A. Arratia, Coalescing {B}rownian motions on the line, PhD thesis, University of Wisconsin
1979.

\bibitem{Bauer}R.O. Bauer, Discrete L\"owner evolution,
{\itshape Ann. Fac. Sci. Toulouse Math. (6)} {\bfseries 12} (2003), 
no.4, 433--451.

\bibitem{Bil}P. Billingsley, Convergence of probability measures, 
John Wiley \& Sons, Inc., New York 1999. 

\bibitem{MB}M. Bj\"orklund, Ergodic theorems for random clusters, 
preprint 2009.

\bibitem{CM1}L. Carleson, N. Makarov, Aggregation in the plane and Loewner's equation, {\itshape Comm. Math. Phys. (216)} (2001), 583-607.

\bibitem{CM2}L. Carleson, N. Makarov, Laplacian path models. Dedicated
  to the memory of Thomas H. Wolff, {\itshape J. Anal. Math. (87)} (2002), 103-150.

\bibitem{David}B. Davidovitch, H.G.E. Hentschel, Z. Olami, I. Procaccia, L.M. Sander, 
E.Somfai, Diffusion limited aggregation and iterated conformal maps, 
{\itshape Phys. Rev. E. (87)} (1999), 1368.

\bibitem{Dieu}J. Dieudonn\'e, Foundations of Modern Analysis, 
Third printing, Academic Press, New York \& London, 1969.

\bibitem{Duren}P.L. Duren, Univalent functions, Grundlehren der  
mathematischen Wissenschaften {\bfseries 259}, Springer Verlag, 
New York 1983. 

\bibitem{Eden}M. Eden, A two-dimensional growth process. {\itshape 1961  Proc. 4th Berkeley Sympos. Math. Statist. and Prob., Vol. IV  pp. 223--239 Univ. California Press, Berkeley, Calif. }

\bibitem{FINR}L.R.G. Fontes, M. Isopi, C.M. Newman, K. Ravishankar, The {B}rownian web: characterization and convergence, {\itshape Ann. Probab. (32)} (2004), 2857--2883.

\bibitem{Garnett}J.B. Garnett, Bounded analytic functions, Graduate texts in mathematics {\bfseries 236}, Rev. first ed., Springer Verlag, 2007. 

\bibitem{HL}M. Hastings, L. Levitov, Laplacian growth as one-dimensional 
turbulence, {\itshape Phys. D. (116)} (1998), 244-252.

\bibitem{JS}F. Johansson, A. Sola, Rescaled L\'evy-Loewner hulls and 
random growth, {\itshape Bull. Sci. Math. (133)} (2009), 238-256.

\bibitem{JuKoBo}R. Julien, M. Kolb, R. Botet, Diffusion limited
  aggregation with directed and anisotropic diffusion, {\itshape
    J. Physique (45)} (1984), 395-399.

\bibitem{Kallenbergmeas}O. Kallenberg, Random measures, Third Ed., 
Akademie-Verlag Berlin, 1983.

\bibitem{Kallenberg}O. Kallenberg, Foundations of Modern Probability, Second Ed., Springer-Verlag,
New York, 2002 .

\bibitem{Lawler}G. Lawler, Conformally invariant processes in the plane,
Mathematical Surveys and Monographs {\bfseries 114}, American Mathematical 
Society, Providence, R.I. 2005.

\bibitem{LSW}G.F. Lawler, O. Schramm, W. Werner, Conformal invariance of planar loop-erased random walks and uniform spanning trees, {\itshape Ann. Probab.  (32)} (2004),  no. 1B, 939--995.


\bibitem{MS}R. Malaquias, S. Rohde, V. Sessak, M. Zinsmeister, On Laplacian 
growth, in preparation.

\bibitem{NT}J. Norris, A. Turner, Planar aggregation and 
the coalescing Brownian flow, available at http://arxiv.org/abs/0810.0211.

\bibitem{PHF}M.N. Popescu, H.G.E. Hentschel, F. Family, Anisotropic
  diffusion-limited aggregation {\itshape Phys. Rev. E (69)} (2004), no.1.

\bibitem{personalRohde}S. Rohde, Personal communication, 2008.

\bibitem{RZ}S. Rohde, M. Zinsmeister, Some remarks on Laplacian growth,
{\itshape Topology Appl. (152)} (2005), 26-43.

\bibitem{Sato}K. Sato, L\'evy processes and infinitely divisible distributions,
Cambridge Studies in Advanced Mathematics (68), Cambridge University Press, (1999).


\bibitem{TW}B. T{\'o}th, W. Werner, The true self-repelling motion,
{\itshape Probab. Theory Related Fields (111)} (1998), 375--452.

\bibitem{WS}T.A. Witten, Jr., L.M. Sander, Diffusion-limited
  aggregation, a kinetic critical phenomenon, {\itshape
    Phys. Rev. Lett. 47} (1981), 1400 - 1403.

\end{thebibliography}
\end{document}